\newtheorem{remark}[theorem]{Remark}
\DeclareMathOperator*{\arginf}{arg\,inf}
\title{%
  A duality-based optimization approach for model adaptivity in
  heterogeneous multiscale problems}
\author{%
Matthias Maier%
\thanks{%
  School of Mathematics, University of Minnesota, %
  206 Church Street SE, Minneapolis, MN 55455, USA. \email{msmaier@umn.edu}.}
\and Rolf Rannacher%
\thanks{%
  Institute of Applied Mathematics, Heidelberg University, Im Neuenheimer %
  Feld 205, 69120 Heidelberg, Germany. %
  \email{rannacher@iwr.uni-heidelberg}}
}
\begin{document}

\maketitle
\slugger{mms}{xxxx}{xx}{x}{x--x}

\begin{abstract}
  This paper introduces a novel framework for model adaptivity in the context
  of heterogeneous multiscale problems. The framework is based on the idea
  to interpret model adaptivity as a \emph{minimization problem} of local
  error indicators, that are derived in the general context of the
  \emph{Dual Weighted Residual} (DWR) method. Based on the optimization
  approach a post-processing strategy is formulated that lifts the
  requirement of strict a priori knowledge about applicability and quality
  of effective models. This allows for the systematic, ``goal-oriented''
  tuning of \emph{effective models} with respect to a \emph{quantity of
  interest}.
  The framework is tested numerically on elliptic diffusion problems with
  different types of heterogeneous, random coefficients, as well as an
  advection-diffusion problem with strong microscopic, random advection
  field.
\end{abstract}

\begin{keywords}
  finite element method, mesh adaptation, model optimization, model
  adaptation, goal-oriented adaptivity, DWR method
\end{keywords}

\begin{AMS}
  35J15 65C20 65N12 65N15 65N30 65N50
\end{AMS}

\pagestyle{myheadings}
\thispagestyle{plain}
\markboth%
  {M. Maier \& R. Rannacher}%
  {Duality-based optimization in multiscale problems}


\section{Introduction}
\label{sec:3introduction}

A number of different approaches for modeling multiscale phenomena in the
context of finite-element methods have been introduced over the last years.
They either rely on the existence of a periodic or stochastic substructure
or on the scale-dependent splitting of variational solution- and test
spaces \cite{Hughes:1998, Brezzi:1999, Efendiev:2004, E:2003a}. The use of
multiscale methods comes at a significant price with respect to sources of
error: Among the usual discretization error due to a numerical
approximation of the partial differential equation (PDE), multiscale
methods exhibit an inherent \emph{model error} resulting from a
\emph{modeling assumption} for scale separation. This makes the idea of a
posteriori error estimation, where a quantitative estimate for the
different sources of error is computed by means of a post-processing
approach highly attractive. The a posteriori control of discretization
errors for multiscale methods is well understood \cite{Henning:2011,
Henning:2012, Abdulle:2013, Larson:2004, Larson:2005}.

The novelty of a posteriori error estimation with respect to multiscale
methods lies in the possibility for \emph{model adaptivity}. First results
for \emph{estimating and controlling the model error} in the context of
multiscale schemes were given by Oden et
al.~\cite{Oden:2000a,Oden:2000b,Romkes:2007} and Braack and
Ern~\cite{Braack:2003} in the context of the \emph{Heterogeneous Multiscale
Method} (HMM) \cite{E:2003a}. The key idea is to use the error-identity
stemming from  the \emph{Dual Weighted Residual} (DWR) method introduced by
Becker and Rannacher~\cite{Becker:1996a, Becker:2001} to quantify a local
model error. This information can then be used for different
model-adaptation strategies: A possible approach is to locally switch from
a cheap, coarse model to an expensive, full model within an adaptation
cycle \cite{Braack:2003}. Alternatively, as a pure post-processing
strategy, \emph{region of influence} can be constructed on which a
finescale correction is computed in full
\cite{Oden:2000a,Oden:2000b,Romkes:2007}.

Based on a multiscale framework introduced in a previous publications by the
authors \cite{Maier:2014,Maier:2015}, this paper presents a novel approach
for model adaptivity. Instead of using an a priori choice of increasingly
accurate models to switch between them (depending on the local error
estimate), it uses the error identity obtained by the duality argument 
directly in a
minimization problem. This has the advantage that no a priori knowledge
about effective models and reconstruction principles has to be available.
The optimization problem itself is used to select the optimal model.

The optimization approach requires a certain quality of the approximation
of the dual solution that will be addressed with an efficient local
reconstruction approach. The error identity lifts the question of suitable
approximation in terms of a \emph{quantity of interest} to the question of
suitable approximation properties of the localization technique for the
dual problem. The latter is typically measured in the $L^2$-norm of the
gradient of the error of the dual approximation, for which---depending on
the localization approach---strong approximation properties are available.
Thus, the proposed optimization framework can be interpreted as a
multiscale method in its own right, where a reconstruction process is used
for the dual solution. The \emph{modeling aspect} of the optimization
problem lies in the choice of the quantity of interest and the choice of
local reconstruction of the dual solution.

The outline of the paper is as follows. In Section~\ref{sec:2mhmm} an
abstract multiscale scheme for model adaptation is outlined shortly
\cite{Maier:2014,Maier:2015}. The section also covers the necessary a
posteriori error analysis with the DWR method and an efficient
approximation strategy for the solution of the dual problem involved. With
this prerequisites at hand, a model-optimization framework is introduced in
Section~\ref{sec:3framework}. It is based on a minimization problem
formulated with the help of an error identity from the a posteriori error
analysis. Implementational details are discussed in
Section~\ref{sec:4implementational}. In Section~\ref{sec:5numerical} an
extensive numerical study for an elliptic diffusion problem and an
advection-diffusion problem is shown. A conclusion and outlook is given in
Section~\ref{sec:6conclusion}.


\section{An abstract multiscale scheme for model adaptation}
\label{sec:2mhmm}

The discussion in this section is based on a multiscale scheme for model
adaptation introduced by the authors \cite{Maier:2014,Maier:2015} that
explicitly decouples all discretization and modeling parameters. It is a
reformulation of the classical HMM method by E and
Engquist~\cite{E:2003a} and shares similarities with model adaptation
frameworks introduced by Oden and Vemaganti~\cite{Oden:2000a, Oden:2000b}
and Braack and Ern~\cite{Braack:2003}. We briefly discuss a slightly
simplified variant in this section. For a detailed introduction we refer to
the aforementioned publications.

Let us consider the following multi-scale model problem: Find
$u^\varepsilon\in H^1_0\left(\Omega\right)$ s.\,t.
\begin{gather}
  \label{eq:3modprob}
  \left(A^\varepsilon\nabla u^\varepsilon, \nabla\varphi \right) =
  \left(f,\varphi \right)\quad\forall\,\varphi\in H^1_0\left(\Omega\right),
\end{gather}
on a bounded domain $\Omega\subset\mathbb{R}^d\;(d=2,3)$ where the
generally tensor-valued function $A^\varepsilon\in L^\infty \left(
\Omega\right)^{d\times d}$ is of heterogeneous character and
highly oscillating on a small length scale indicated by a scaling
parameter ${\varepsilon}$.
Here, $H^1_0\left(\Omega\right)$ is the
usual first-order Sobolev Hilbert space with zero Dirichlet data along the
boundary $ {\partial}\Omega$. $(\cdot,\cdot)$ denotes the $L^2$
scalar product on $\Omega$ and $\|\cdot\| = (\cdot,\cdot)^{1/2}$
the corresponding norm. The norms of other function spaces are indicated by
subscripts, e.\,g., $\|\cdot\|_{L^\infty(\Omega)}$ or $\|\cdot\|_K =
\|\cdot\|_{L^2(K)}$ for a subset $K\subset \bar\Omega$. We assume the
coefficient tensor $A^\varepsilon$ to be symmetric and positive
definite (uniformly in $\varepsilon$),
\begin{align}
  \label{eq:2elliptic}
  A^\varepsilon_{ij} =  A^\varepsilon_{ji},\quad
  \alpha|\xi|^2 \leq \sum_{i,j=1}^d A^\varepsilon_{ij}\xi_i\xi_j \leq
  \beta|\xi|^2,\quad \text{a.\,e. on }\Omega, \quad \xi\in\mathbb{R}^d,
\end{align}
with constants $\,\alpha,\beta\in\mathbb{R}_{+}$, so that
(\ref{eq:3modprob}) admits a unique solution.

Due to the finescale character of $A^\varepsilon$, a direct numerical
simulation of (\ref{eq:3modprob}) is computationally very expensive.
We thus introduce an \emph{effective model problem}
\begin{gather}
  \label{eq:3effectiveproblem}
  \left(\bar A^\delta\nabla u^\delta, \nabla\varphi \right) =
  \left(f,\varphi \right)\quad\forall\,\varphi\in H^1_0\left(\Omega\right)
\end{gather}
based on a sampling mesh $\mathbb{T}_{\delta}(\Omega)$ of $\Omega$ together
with an effective tensor
\begin{align}
  \bar A^\delta:\,\mathbb{T}_{\delta}(\Omega)\to\mathbb{R}^{d\times d}
\end{align}
with region-wise constant values; see Figure~\ref{fig:3omega}.

\begin{remark}
  Effective parameters $\big(\mathbb{T}_{\delta}(\Omega),A^\delta\big)$ can
  be obtained by different means, e.\,g., by using cell problems derived
  within a corresponding homogenization theory
  \cite{Allaire:1992,Cioranescu:1999}:
  \begin{gather}
    \label{eq:3mhmmhomoga}
    \bar A^\delta_{ij}(K)\coloneqq\fint_{Y_K^\delta}
    A^\varepsilon(x)\big(\nabla_x\omega_i(x)+\boldsymbol{e}_i\big)\cdot
    \big(\nabla_x\omega_j(x)+\boldsymbol{e}_j\big)\,\mathrm{d} x,
  \end{gather}
  where $\boldsymbol{e}_i$ denotes the $i$-th cartesian unit vector and the
  $\omega_i\in\tilde H^1_{\text{per}}(Y_K^\delta)$ are solutions of a
  so-called cell problem
  \begin{gather}
    \label{eq:3mhmmhomog}
    \int_{Y_K^\delta}A^\varepsilon(x)\big(\nabla_x\omega_i(x)+\boldsymbol{e}_i)
    \cdot\nabla\varphi=0
    \quad\forall\varphi\in\tilde H^1_{\text{per}}(Y_K^\delta),
  \end{gather}
  or by using simple averaging strategies such as the geometric mean
  value \cite{Warren:1961}:
  \begin{gather}
    \label{eq:3mhmmaverage}
    \log \bar A^\delta_{ij}(K) \coloneqq \frac1{|Y_K^\delta|}\int_{Y_K^\delta}
    \big(\log A^\varepsilon_{ij}(y)\big) \,\mathrm{d} y.
  \end{gather}
  Here, $Y_K^\delta$ denotes a rescaled copy of the unit cell $Y=[0,1]^d$
  centered at the midpoint of a given sampling-mesh cell
  $K\in\mathbb{T}_{\delta}(\Omega)$. $\tilde H^1_{\text{per}}(Y_K^\delta)$
  is the subspace of $H^1(Y_K^\delta)$ consisting of $d$-periodic
  functions with zero mean value. $\fint_K\coloneqq1/|K|\int_K$ denotes the
  arithmetic average.
\end{remark}

The purpose of this paper is to discuss a novel approach of determining the
effective values $\bar A^\delta$ (for a given sampling discretization) by
means of an optimization process. It will be based on an error identity
given by the solution of a dual problem. For the sake of simplicity, we
will neglect \emph{finescale discretization} errors that emerge by
numerically approximating (\ref{eq:3mhmmhomoga}), or
(\ref{eq:3mhmmaverage}), and will only introduce a macroscale
discretization.
%
\begin{figure}
  \centering
    \begin{tikzpicture}[scale=1.2]
      \draw[fill=black!40,thick] (2.0,1.0) rectangle (3.0,2.0);
      \draw[step=0.5,gray,thick] (0.25,0.25) grid (3.75,3.75);
      \draw[step=0.25,gray,thick] (0.26,0.26) grid (2.5,2.5);
      \draw[step=0.125,gray,thick] (0.5,1.25) grid (1.5,2.25);
      \node at (-0.45,0.5) {$\mathbb{T}_{H}(\Omega)$};
      \draw[step=1.0, line width=2.0pt] (0.25,0.25) grid (3.75,3.75);
      \node at (-0.45,3.5) {$\mathbb{T}_{\delta}(\Omega)$};
      \node (A) at (2.5,1.5) {};
      \draw[fill=black!40,thick] (6.5,1.0) rectangle (7.5,2.0);
      \draw[thick] (6.5,1.0) rectangle (7.5,2.0);
      \node[] (B) at (7.0,1.5) {};
      \node[] at (7.0,0.5) {$Y_K^\delta$};
      \draw[<-, thick, shorten <=5pt]
       (A) edge[bend left] node[above]
       {$\qquad\bar A^\delta_K\in\mathbb{R}^{d\times d}$}(B);
    \end{tikzpicture}
  \caption{The computational domain $\Omega$ together with the
    \emph{sampling mesh} $\mathbb{T}_\delta(\Omega)$ consisting of sampling
    regions $K\in\mathbb{T}_\delta$. The coarse mesh $\mathbb{T}_H(\Omega)$ used for the
    final finite-element discretization is a refinement of the sampling
    mesh $\mathbb{T}_\delta$.
  }
  \label{fig:3omega}
\end{figure}
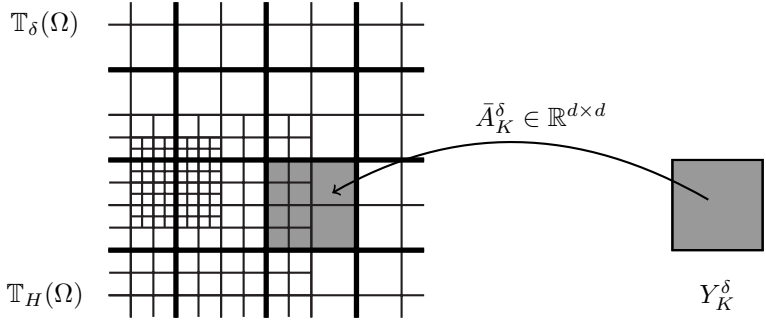

Let $\mathbb{T}_{H}(\Omega)$ be a coarse grid for numerically approximating
the variational equation (\ref{eq:3effectiveproblem}).
%
\begin{definition}[Fully discretized problem]
  \label{defi:3probdisc}
  Let $\mathbb{T}_{H}$ be a mesh covering $\overline \Omega$, and let
  $V_{H}(\Omega)\subset H^1_0(\Omega)$ be a corresponding finite-element ansatz 
  space. The fully discrete problem reads: Find
  $U\in V_{H}(\Omega)$ s.\,t.
  \begin{align}
    \label{eq:3probdisc}
    \left(\bar A^\delta\nabla U, \nabla\varphi^H \right) = \left( f,\varphi^H
    \right)\quad\forall\,\varphi^H\in V_{H}(\Omega).
  \end{align}
\end{definition}
%
\begin{remark}
  The problems (\ref{eq:3modprob}), (\ref{eq:3effectiveproblem}), and
  (\ref{eq:3probdisc}) are well-posed. Further, specific a priori
  assumptions on $A^\varepsilon$ and corresponding concrete choices of
  upscaling (such as (\ref{eq:3mhmmhomoga}), and (\ref{eq:3mhmmaverage}))
  lead to different a priori error estimates. We refer to \cite{Maier:2014,
  Maier:2015}, for a detailed discussion.
\end{remark}
%
\begin{remark}
  Typically, a \emph{macroscale discretization error} is associated with
  the scale H, and (depending on the sampling/upscaling strategy) a
  \emph{model error} and \emph{sampling error} can be associated with the
  scales $\varepsilon$ and $\delta$ \cite{Maier:2014, Maier:2015}. (We have
  omitted introducing a \emph{microscale discretization error} associated
  with a scale $h$.) In principle, all of these scales have to be
  controlled and chosen appropriately in order to achieve a certain
  accuracy, we refer to \cite{Maier:2015} for a detailed discussion. In
  this publication we focus on the control of the model error, i.\,e. on
  the task of finding a suitable model $\bar A^\delta$. We will thus assume
  that discretization errors are well controlled by choosing $H$ and $h$
  suitably small.
\end{remark}


\subsection{Duality-based error identity}

Suppose that a \emph{quantity of interest} is given by the value $\langle
j,u^\varepsilon\rangle$, where $j\in H^{-1}(\Omega)$ is a linear and
continuous functional and $\langle.\,,.\rangle$ denotes the duality
pairing. Define a \emph{dual problem} to find $z^\varepsilon\in
H^1_0(\Omega)$ s.\,t.
\begin{align}
  \label{eq:4dualproblem}
  \big(A^\varepsilon\nabla\varphi,\nabla z^\varepsilon\big)=\langle j,
  \varphi\rangle \quad\forall\,\varphi\in H^1_0(\Omega).
\end{align}
The dual problem is well-posed and its solution immediately gives rise to 
an \emph{error identity}.

\begin{lemma}[Error identity \cite{Maier:2014,Maier:2015}]
  Let $u^\varepsilon$ be the solution of \eqref{eq:3modprob}, $U$ be the
  solution of \eqref{eq:3probdisc}, and $z^\varepsilon$ be the solution of
  \eqref{eq:4dualproblem}. Then,
  \begin{gather}
    \label{eq:4erroridentity}
    \langle j,u^\varepsilon\rangle  - \langle j,U\rangle
    = \underbrace{\big(f,z^\delta\big) -
    \big(\bar A^\delta\nabla U,\nabla z^\delta\big)}_{=:\,\theta^H}
    +\underbrace{\big(\bar A^\delta\nabla u^\delta,\nabla z^\varepsilon\big) -
    \big(A^\varepsilon\nabla u^\delta,\nabla z^\varepsilon\big)}
    _{=:\,\theta^\delta},
  \end{gather}
  with the following two \emph{error estimators}: $\,\theta^H\,$ representing a
  \emph{residual on the macroscale}, and $\,\theta^\delta\,$ estimating the
  model error. Here, $z^\delta\in H^1_0(\Omega)$ is the solution of a
  corresponding \emph{effective dual problem} \cite{Maier:2014},
  \begin{align}
    \label{eq:4dualproblemeff}
    \big(\bar A^\delta\nabla\varphi,\nabla z^\delta\big)=\langle j,\varphi\rangle
    \quad\forall\,\varphi\in H^1_0(\Omega).
  \end{align}
The model error $\theta^\delta$ is splitted into a sum of local
\emph{model-error} indicators:
\begin{align}
  \label{eq:4etadapprox}
  \theta^\delta =\sum_{K\in\,\mathbb{T}_{\delta}(\Omega)}\eta^\delta_K,
  \quad
  \eta^\delta_K \coloneqq \big(\{A^\varepsilon-\bar A^\delta\}\nabla
  u^\delta,\nabla z^\varepsilon\big)_{K}.
\end{align}

\end{lemma}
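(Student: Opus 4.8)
The plan is to establish \eqref{eq:4erroridentity} by the Galerkin substitution device underlying the DWR method: each occurrence of the quantity of interest is rewritten by inserting the appropriate solution as a test function into the variational equation of its dual partner, and the resulting terms are made to telescope through the intermediate quantity $\langle j,u^\delta\rangle$. No regularity beyond the $H^1_0$-setting is required, since $u^\varepsilon,u^\delta,z^\varepsilon,z^\delta\in H^1_0(\Omega)$ and $U\in V_H(\Omega)\subset H^1_0(\Omega)$, so every substitution below uses an admissible test function.

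First I would treat the model-error term $\theta^\delta$. Testing the fine dual problem \eqref{eq:4dualproblem} with $\varphi=u^\varepsilon$ and then the fine primal problem \eqref{eq:3modprob} with $\varphi=z^\varepsilon$ gives $\langle j,u^\varepsilon\rangle=\big(A^\varepsilon\nabla u^\varepsilon,\nabla z^\varepsilon\big)=\big(f,z^\varepsilon\big)$. Testing the effective primal problem \eqref{eq:3effectiveproblem} with $\varphi=z^\varepsilon$ turns this into $\langle j,u^\varepsilon\rangle=\big(\bar A^\delta\nabla u^\delta,\nabla z^\varepsilon\big)$. On the other hand, testing \eqref{eq:4dualproblem} with $\varphi=u^\delta$ yields $\langle j,u^\delta\rangle=\big(A^\varepsilon\nabla u^\delta,\nabla z^\varepsilon\big)$. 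Subtracting the two identities gives
\begin{align*}
  \theta^\delta = \big(\bar A^\delta\nabla u^\delta,\nabla z^\varepsilon\big) - \big(A^\varepsilon\nabla u^\delta,\nabla z^\varepsilon\big) = \langle j,u^\varepsilon\rangle - \langle j,u^\delta\rangle.
\end{align*}

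Next I would treat the macroscale residual $\theta^H$. Testing the effective primal problem \eqref{eq:3effectiveproblem} with $\varphi=z^\delta$ and then the effective dual problem \eqref{eq:4dualproblemeff} with $\varphi=u^\delta$ gives $\big(f,z^\delta\big)=\big(\bar A^\delta\nabla u^\delta,\nabla z^\delta\big)=\langle j,u^\delta\rangle$. Testing \eqref{eq:4dualproblemeff} with $\varphi=U$, which is admissible because $U\in V_H(\Omega)\subset H^1_0(\Omega)$, gives $\big(\bar A^\delta\nabla U,\nabla z^\delta\big)=\langle j,U\rangle$. Hence $\theta^H=\big(f,z^\delta\big)-\big(\bar A^\delta\nabla U,\nabla z^\delta\big)=\langle j,u^\delta\rangle-\langle j,U\rangle$. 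Adding the two blocks, the intermediate term $\langle j,u^\delta\rangle$ cancels and one obtains $\theta^H+\theta^\delta=\langle j,u^\varepsilon\rangle-\langle j,U\rangle$, which is \eqref{eq:4erroridentity}. The splitting \eqref{eq:4etadapprox} is then immediate, since $\theta^\delta$ is a single integral over $\Omega$ of the integrand $(\bar A^\delta-A^\varepsilon)\nabla u^\delta\cdot\nabla z^\varepsilon$, so restricting to the cells of the partition $\mathbb{T}_\delta(\Omega)$ and using additivity of the $L^2$ inner product yields $\theta^\delta=\sum_K\eta^\delta_K$.

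I do not expect a genuine analytical obstacle: the statement is an exact algebraic identity, and the real care lies in bookkeeping, namely pairing the fine dual $z^\varepsilon$ with the two primal solutions $u^\varepsilon,u^\delta$ and the effective dual $z^\delta$ with $u^\delta,U$ so that the chain telescopes through $\langle j,u^\delta\rangle$. One point worth flagging is the orientation in \eqref{eq:4etadapprox}: collecting the terms of $\theta^\delta$ produces the integrand $(\bar A^\delta-A^\varepsilon)\nabla u^\delta\cdot\nabla z^\varepsilon$, so the sign convention in the definition of $\eta^\delta_K$ must be read consistently with that of $\theta^\delta$ for the cellwise decomposition to reproduce $\theta^\delta$ exactly.
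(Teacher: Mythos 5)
Your proof is correct and complete; the paper itself gives no proof of this lemma (it defers to \cite{Maier:2014,Maier:2015}), and your telescoping Galerkin-substitution argument through the intermediate value $\langle j,u^\delta\rangle$ --- pairing $z^\varepsilon$ with $u^\varepsilon,u^\delta$ and $z^\delta$ with $u^\delta,U$ --- is exactly the standard DWR derivation underlying the cited references. Your closing remark on orientation is also on point: as printed, \eqref{eq:4etadapprox} defines $\eta^\delta_K$ with integrand $(A^\varepsilon-\bar A^\delta)\nabla u^\delta\cdot\nabla z^\varepsilon$, whose sum equals $-\theta^\delta$ under the convention of \eqref{eq:4erroridentity}, whereas Section~\ref{sec:3framework} later writes $\theta^\delta=\big((\bar A^\delta-A^\varepsilon)\nabla u^\delta,\nabla z^\varepsilon\big)$, so \eqref{eq:4etadapprox} carries a sign typo that your bookkeeping correctly exposes.
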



\subsection{A localization strategy for the dual problem}

A fundamental difficulty arises from the fact that computing the solution
of the dual problem is (in case of the elliptic model problem) of the same
complexity as the primal problem itself. A \emph{global} fine-scale
approximation of $z^\varepsilon$ has to be considered infeasible. Thus, a
strategy to approximate the dual problem with low computational overhead is
needed.

We proposed \cite{Maier:2014, Maier:2015} a strategy that combines the
usage of a global, effective approximation of $z^\varepsilon$ (such as
$z^\delta$) with a local enhancement. The enhancement is given by localized
reconstruction problems in spirit of a \emph{variational multiscale}
ansatz.
%
\begin{definition}[Local enhancement]
  \label{defi:4localenhancement}
  Let $z^\delta$ be the solution of (\ref{eq:4dualproblemeff}),
  and let $\big\{\omega_K\::\:K\in\,\mathbb{T}_{\delta}(\Omega)\big\}$ be a
  set of \emph{reconstruction patches} fulfilling $\omega_K\supset K$.
  Define a patch-wise reconstruction $z^\delta_K\in H^1_0(\omega_K)$ by
  \begin{align}
    \label{eq:4localenhancement}
    \big(A^\varepsilon\nabla\varphi,\nabla(z^\delta+z^\delta_K)\big)
    =\langle j,\varphi\rangle \quad\forall\varphi\in H^1_0(\omega_K).
  \end{align}
\end{definition}

With the choice $\omega(K)=K$, the locally reconstructed dual solution
leads to a conforming ansatz
$z^\delta+\sum_{K\,\in\mathbb{T}_{\delta}}\,z^\delta_K\,\in H^1(\Omega)$.
In this case the above local enhancement strategy can be regarded as a variant
of the VMM formulation that only has a reconstruction coupling from coarse-
to finescale (and omits the opposite compression coupling).

In contrast to residual-type estimators that can be evaluated in a simple
post-processing step, the practical evaluation of the error estimators
require the approximation of an additional, intermediate dual solution and
effective coefficients; for details we refer to
\cite{Maier:2014,Maier:2015}.


\section{Model-optimization framework}
\label{sec:3framework}

In the previous section an error identity and local error estimates were
introduced for the model error, as well as, the macroscale discretization
error. The treatment of discretization errors by adaptive mesh refinement
with the help of local error indicators is well established
\cite{Becker:1996a, Becker:2001}. The question arises what to do in case of
the model error: Based on the concept of the effective model
\begin{align}
  \bar A^\delta:\,\mathbb{T}_{\delta}(\Omega)\to\mathbb{R}^{d\times d},
\end{align}
two fundamentally different approaches for model adaptivity are possible.
The first is based on the refinement of the sampling mesh
$\mathbb{T}_{\delta}(\Omega)$ and associated sampling regions
$\{Y_K^\delta:\,K\in\,\mathbb{T}_{\delta}(\Omega)\}$ while keeping the same
reconstruction process for all sampling regions \cite{Maier:2014}. This is
comparable to a classical discretization adaptation. The second strategy
consists of switching the effective model used for the reconstruction
process~\cite{Oden:2000a,Oden:2000b,Romkes:2007,Braack:2003}. This is done
by locally selecting a more expensive but also more precise sampling
strategy from an a priori chosen list of effective models. Typically, the
same fixed sampling discretization is used throughout the process.

In this section a novel approach for model adaptivity is introduced that
expresses the adaptation process as a \emph{minimization problem} of the
error estimator $\theta^\delta$: Given the \emph{error identity}
(\ref{eq:4erroridentity}),
\begin{align}
  \langle j,u^\varepsilon\rangle - \langle j,U\rangle
  =\theta^H+\theta^\delta,
  \quad
  \theta^\delta=\big( (\bar A^\delta-A^\varepsilon)\nabla u^\delta,
  \nabla z^\varepsilon\big),
\end{align}
model adaptivity is interpreted as solving an \emph{optimization problem}
\begin{align}
  \arginf_{A^\delta}
  \sum_{K\in\mathbb{T}_{\delta}(\Omega)}\Big[
  \big|\big((\bar A^\delta-A^\varepsilon)\nabla u^\delta,
  \nabla z^\varepsilon\big)_K\big|^2+\text{regularization}\Big].
\end{align}
This approach can be used as a \emph{model-optimization framework} to
locally select optimal coefficients from a set of available models, as well
as in situations where a strategy to derive an effective model is not known
and, thus, an efficient post-processing strategy is needed to construct
one. The latter approach has the advantage that no a priori knowledge about
effective models and reconstruction principles has to be available. The
optimization problem itself is used to select the optimal model.
For the sake of simplicity we will omit the macroscale discretization error
$\theta^H$ in the subsequent discussion. We thus choose $H$ to be suitably
small to guarantee $U\approx u^\delta$. Techniques to control $\theta^H$
simultaneously with the model error are discussed in \cite{Maier:2015}.


\subsection{An optimization approach}

The \emph{quality} of an effective model $\bar A^\delta$ \emph{with respect
to a quantity of interest} $\,\langle j,u^\varepsilon\rangle\,$ can be
measured with the help of the error identity (\ref{eq:4erroridentity}):
\begin{align}
  \label{eq:5functional}
  \langle j,u^\varepsilon\rangle-\langle j,u^\delta\rangle
  =
  \big( (A^\delta-A^\varepsilon)\nabla u^\delta(\bar A^\delta),
  \nabla z^\varepsilon\big)_{L^2(\Omega)^d}.
\end{align}
Given a fixed, a priori chosen sampling discretization
$\mathbb{T}_{\delta}(\Omega)$, define a set of \emph{admissible
coefficients} consisting of symmetric and elliptic coefficient tensors 
(as defined in \eqref{eq:2elliptic},
\begin{align}
  \mathcal{A}^\delta\coloneqq\big\{
  \bar A^\delta:\,\mathbb{T}_{\delta}(\Omega)\to\mathbb{R}^{d\times d}\,:\,
  \bar A^\delta\text{ fulfills (\ref{eq:2elliptic})}\big\}.
\end{align}
%

\begin{definition}[Model-optimization problem]
  \label{defi:5modelopt}
  Let $\bar A^{\delta,0}$ be an initial effective model and let
  $\{\alpha_K\}_{K\in\mathbb{T}_{\delta}(\Omega)}$,
  $\alpha_K\in\mathbb{R}^+$, be a set of (local) regularization parameters.
  Then, an optimal model $\bar A^{\delta,\text{opt}}$ is defined to be a
  solution of
  \begin{align}
    \label{eq:5optprob}
    \arginf_{\bar A^\delta\in\mathcal{A}^\delta}
    \sum_{K\in\,\mathbb{T}_{\delta}(\Omega)} \left\{
    \big|\big( (\bar
    A^\delta-A^\varepsilon)\nabla u^\delta(\bar A^\delta),\nabla
    z^\varepsilon\big)_K\big|^2 + \alpha_K\big\|\,\bar A^\delta_K-\bar
    A^{\delta,0}_K\big\|^2_{\mathbb{R}^{d\times d}} \right\},
  \end{align}
  subject to the side condition
  \begin{align}
    \label{eq:5sidecond}
    \big(\bar A^\delta\nabla u^\delta(\bar
    A^\delta),\nabla\varphi\big)=(f,\varphi)\quad\forall\varphi\in
    H^1_0(\Omega).
  \end{align}
\end{definition}

{\noindent}The regularization parameters $\alpha_K$ are best fixed to 
a uniform value
$\alpha_K=\alpha_0$ on all sampling regions. Here, $\alpha_0$ is chosen to
be roughly $0.01-1$ times the typical size of
$|\tilde\theta^\delta|^2/|\bar A^\delta_K|^2$.
%
\begin{remark}
  \label{rem:5ellipticity}
  Given the fact that ellipticity (\ref{eq:2elliptic}) is impractical to
  enforce, because the correct lower-bound $\,\alpha\,$ is usually not known,
  the ellipticity constraint present in $\mathcal{A}^\delta$ is dropped in
  the concrete numerical computations. The regularization together
  with a factor $\,\alpha_K\,$ appropriately chosen is enough to ensure
  sensible coefficients $\,\bar A^\delta$.
\end{remark}
%
\begin{proposition}
  \label{prop:5wellposedoptprob}
  The optimization problem admits a (not necessarily unique) minimum.
\end{proposition}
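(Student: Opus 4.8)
The plan is to argue by the direct method of the calculus of variations, which in the present situation collapses to the classical Weierstrass extreme value theorem. The decisive structural observation is that every admissible coefficient $\bar A^\delta\in\mathcal{A}^\delta$ takes region-wise constant matrix values on the \emph{fixed} sampling mesh $\mathbb{T}_\delta(\Omega)$, so that the minimization in \eqref{eq:5optprob} is effectively carried out over a subset of the finite-dimensional space of symmetric tensor fields that are constant on each cell $K\in\mathbb{T}_\delta(\Omega)$. Existence of a minimizer then follows as soon as I verify that the admissible set $\mathcal{A}^\delta$ is compact and that the objective functional $J(\bar A^\delta)$ in \eqref{eq:5optprob} is continuous.

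First I would check that $\mathcal{A}^\delta$ is nonempty and compact. Nonemptiness is immediate, since the isotropic choice $\bar A^\delta_K=\tfrac12(\alpha+\beta)\,I$ on every cell satisfies the two-sided bound \eqref{eq:2elliptic}. For compactness I use that in finite dimensions closed and bounded sets are compact: the spectral bounds $\alpha|\xi|^2\le\xi^\top \bar A^\delta_K\,\xi\le\beta|\xi|^2$ confine each value $\bar A^\delta_K$ to the set of symmetric matrices with eigenvalues in $[\alpha,\beta]$, which is closed (the defining inequalities are non-strict) and bounded (the spectral norm is at most $\beta$). Taking the product over the finitely many cells shows $\mathcal{A}^\delta$ to be closed and bounded, hence compact. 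The regularization term is not needed for this step; it would, however, supply the coercivity required for the direct method in the constraint-free variant of Remark~\ref{rem:5ellipticity}.

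The main work, and the step I expect to be the crux, is the continuity of $J$. This is nontrivial only because of the nonlinear dependence of the state $u^\delta(\bar A^\delta)$ on the coefficient through the side condition \eqref{eq:5sidecond}; note that the dual weight $z^\varepsilon$ from \eqref{eq:4dualproblem} does \emph{not} depend on $\bar A^\delta$ and is therefore a fixed element of $H^1_0(\Omega)$. I would show that the solution map $\bar A^\delta\mapsto u^\delta(\bar A^\delta)$ is continuous from $\mathcal{A}^\delta$ into $H^1_0(\Omega)$ by a standard energy estimate: for two admissible coefficients $A,\tilde A$ with associated states $u,\tilde u$, subtracting the two instances of \eqref{eq:5sidecond} gives $\big(A\nabla(u-\tilde u),\nabla\varphi\big)=\big((\tilde A-A)\nabla\tilde u,\nabla\varphi\big)$; testing with $\varphi=u-\tilde u$ and using the lower bound $\alpha$ in \eqref{eq:2elliptic} together with the uniform a priori bound $\|\nabla\tilde u\|\le\alpha^{-1}\|f\|_{H^{-1}(\Omega)}$ (Lax--Milgram) yields $\|\nabla(u-\tilde u)\|\le\alpha^{-1}\|A-\tilde A\|_{L^\infty(\Omega)}\,\|\nabla\tilde u\|$. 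Since convergence in the finite-dimensional set $\mathcal{A}^\delta$ is precisely convergence of the finitely many matrix values, this proves $u^\delta(\bar A^\delta)\to u^\delta(\bar A^\delta_\ast)$ in $H^1_0(\Omega)$ whenever $\bar A^\delta\to\bar A^\delta_\ast$.

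With the solution map continuous, continuity of $J$ is routine: each data-misfit term $\big|\big((\bar A^\delta-A^\varepsilon)\nabla u^\delta(\bar A^\delta),\nabla z^\varepsilon\big)_K\big|^2$ is a composition of the continuous maps $\bar A^\delta\mapsto\bar A^\delta$ and $\bar A^\delta\mapsto\nabla u^\delta(\bar A^\delta)$ with the continuous $L^2$-pairing against the fixed gradient $\nabla z^\varepsilon$, while the regularization terms $\alpha_K\|\bar A^\delta_K-\bar A^{\delta,0}_K\|^2$ are polynomial, hence continuous; summing over finitely many cells preserves continuity. A continuous function on the compact set $\mathcal{A}^\delta$ attains its infimum, which establishes the existence of an optimal model $\bar A^{\delta,\mathrm{opt}}$. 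I would close by noting that uniqueness genuinely fails in general: since $u^\delta$ enters $J$ nonlinearly, $J$ need not be convex, in agreement with the qualifier ``not necessarily unique'' in the statement.
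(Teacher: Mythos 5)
Your proof is correct, and at its core it is the same direct-method argument the paper uses: establish well-posedness and continuity of the solution map $\bar A^\delta\mapsto u^\delta(\bar A^\delta)$ defined by \eqref{eq:5sidecond}, note that everything lives in a finite-dimensional space because $\bar A^\delta$ is region-wise constant on the fixed mesh $\mathbb{T}_\delta(\Omega)$, and conclude existence of a minimizer. The one genuine difference is where the two arguments get their compactness from. You take the definition of $\mathcal{A}^\delta$ literally and extract compactness from the two-sided spectral constraint \eqref{eq:2elliptic} (eigenvalues confined to $[\alpha,\beta]$, hence a closed bounded product set), after which Weierstrass applies and the regularization term is not needed. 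The paper instead argues via coercivity of the cost functional $\mathcal{F}$ in \eqref{eq:5mathcalf}, i.e.\ $\mathcal{F}(\bar A^\delta)\to\infty$ as $\|\bar A^\delta\|\to\infty$, which is supplied precisely by the Tikhonov term $\alpha_K\|\bar A^\delta_K-\bar A^{\delta,0}_K\|^2$; combined with continuity and the uniform a priori bound $\|\nabla u^\delta(\bar A^\delta)\|\le\alpha^{-1}\|f\|$ this restricts the search to a compact sublevel set. The paper's route is the more robust one for what is actually computed, since by Remark~\ref{rem:5ellipticity} the ellipticity constraint is dropped in the numerics and only the regularization remains to prevent escape to infinity --- a point you in fact anticipate in your second paragraph, so nothing is missing. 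On the other hand, your writeup is more complete than the paper's on the continuity step: the paper merely asserts continuity of $\bar A^\delta\mapsto u^\delta(\bar A^\delta)$, whereas you prove the Lipschitz-type estimate $\|\nabla(u-\tilde u)\|\le\alpha^{-1}\|A-\tilde A\|_{L^\infty(\Omega)}\|\nabla\tilde u\|$ by the standard energy argument, which is exactly the right justification.
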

%
\begin{proof}
  The functional dependency $\,u^\delta(\bar A^\delta)\,$ described by
  (\ref{eq:5sidecond}) with respect to $\,\bar A^\delta\in\mathcal{A}^\delta\,$
  is well-posed---i.\,e., (\ref{eq:5sidecond}) is always uniquely
  solvable---and continuous. Further,
  \begin{align}
    \|\nabla u^\delta(\bar A^\delta)\|\le\frac1\alpha\|f\|,
  \end{align}
  by definition of $\mathcal{A}^\delta$. Hence, the function
  \begin{align}
    \label{eq:5mathcalf}
    \mathcal{F}(\bar A^\delta)\coloneqq
      \sum_{K\in\,\mathbb{T}_{\delta}(\Omega)}\;\left\{
      \big|\big( (\bar A^\delta-A^\varepsilon)\nabla u^\delta(\bar A^\delta),
      \nabla z^\varepsilon\big)_K\big|^2
      + \alpha_K\big\|\,\bar A^\delta_K-A^{\delta,0}_K\big\|^2
      _{\mathbb{R}^{d\times d}}\right\}
  \end{align}
  is well-defined, continuous, and coercive, i.\,e., it holds true that
  \begin{align}
    \mathcal{F}(\bar A^\delta)\to\infty\;\;\;\text{for}\;\;\;\|\,\bar
    A^\delta\|\to\infty.
  \end{align}
  The optimization problem thus possesses a minimizer.
\end{proof}
%
\begin{remark}
  The functional dependency $\,u^\delta(\bar A^\delta)\,$ given by the
  side-condition (\ref{eq:5sidecond}) is highly nonlinear. In fact,
  $\,\|\nabla u^\delta\|_{L^2(K)}\to 0\,$ has to be expected for the limit
  $\,\|\,\bar A^\delta_K\|\to\infty$. Consequently, the term $\,\big|\big(
  (\bar A^\delta-A^\varepsilon)\nabla u^\delta(\bar A^\delta),\nabla
  z^\varepsilon\big)_K\big|^2\,$ is generally not convex. The optimization
  problem is therefore not uniquely solvable in general.
\end{remark}
 
In preparation for the numerical treatment of the optimization problem
(\ref{eq:5optprob}), we formulate the following regularity result for the
cost functional $\,\mathcal{F}\,$ given in (\ref{eq:5mathcalf}).
%
\begin{proposition}
  \label{prop:5gateaux}
  The functional dependency $\,\mathcal{F}(\bar A^\delta)\,$ is
  \emph{Gâteaux-differentiable} and its derivative
  $\,\mathrm{D}\mathcal{F}(\bar A^\delta)[\delta\bar
  A^\delta]\,$ in direction $\,\delta\bar A^\delta\,$ is
  given by
  \begin{multline}
    \mathrm{D}\mathcal{F}(\bar A^\delta)[\delta\bar A^\delta]
    = \sum_{K\in\,\mathbb{T}_{\delta}(\Omega)} \Big\{
    2\eta^\delta_K\,\big(\delta\bar A^\delta_K\nabla u^\delta(\bar
      A^\delta)\,,\nabla z^\varepsilon\big)_K
    \\
    \qquad+\; 2\eta^\delta_K\, \big((\bar A^\delta-A^\varepsilon)
    \nabla w\,,\nabla z^\varepsilon\big)_K
    +\; 2\alpha_K\,(\bar
    A^\delta_K-\bar A^{\delta,0}_K):\delta\bar A^\delta_K \Big\},
  \end{multline}
  with the solution $\,w\,$ $\big(=\mathrm{D}u^\delta(\bar
  A^\delta)[\delta\bar A^\delta]\,\big)$ of the equation
  \begin{align}
    \label{eq:5microscaleresponse}
    \big(\bar A^\delta\nabla w\,,\nabla\varphi\big)
    +\big(\delta\bar A^\delta\nabla u^\delta(\bar A^\delta),
    \nabla\varphi\big)
    =0
    \quad\forall\varphi\in H^1(\Omega).
  \end{align}
\end{proposition}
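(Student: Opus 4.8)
The plan is to reduce the claim to the differentiability of the \emph{state map} $\bar A^\delta\mapsto u^\delta(\bar A^\delta)$ implicitly defined by the side condition \eqref{eq:5sidecond}, and then to read off $\mathrm{D}\mathcal{F}$ via the product and chain rules. First I would note that $z^\varepsilon$ and $A^\varepsilon$ enter $\mathcal{F}$ as fixed data, since the dual problem \eqref{eq:4dualproblem} does not involve $\bar A^\delta$. Hence the only $\bar A^\delta$-dependence of each summand $|\eta^\delta_K|^2$, with $\eta^\delta_K\coloneqq\big((\bar A^\delta-A^\varepsilon)\nabla u^\delta(\bar A^\delta),\nabla z^\varepsilon\big)_K$ (the summand entering $\mathcal{F}$), is the \emph{explicit} one through the factor $\bar A^\delta_K$ and the \emph{implicit} one through $u^\delta(\bar A^\delta)$; the regularization term is a smooth quadratic whose directional derivative in $\delta\bar A^\delta_K$ is at once $2\alpha_K(\bar A^\delta_K-\bar A^{\delta,0}_K):\delta\bar A^\delta_K$. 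Since the sum over $K$ is finite, differentiation commutes with it.

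For the state map I would fix $\bar A^\delta$ and a direction $\delta\bar A^\delta$, and study the difference quotient. Because $\bar A^\delta$ obeys the lower ellipticity bound in \eqref{eq:2elliptic}, for all sufficiently small $|t|$ (say $|t|\le\alpha/(2\|\delta\bar A^\delta\|_{L^\infty(\Omega)})$) the perturbed tensor $\bar A^\delta+t\,\delta\bar A^\delta$ is still uniformly elliptic with constant $\alpha/2$, so $u_t\coloneqq u^\delta(\bar A^\delta+t\,\delta\bar A^\delta)$ is well defined; set $u_0\coloneqq u^\delta(\bar A^\delta)$. Subtracting the two instances of \eqref{eq:5sidecond} shows that $d_t\coloneqq t^{-1}(u_t-u_0)$ satisfies $\big(\bar A^\delta\nabla d_t,\nabla\varphi\big)=-\big(\delta\bar A^\delta\nabla u_t,\nabla\varphi\big)$ for all $\varphi\in H^1_0(\Omega)$. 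Testing with $\varphi=d_t$ and using ellipticity together with the uniform a priori bound $\|\nabla u_t\|\le\frac2\alpha\|f\|$ gives a bound on $\|\nabla d_t\|$ independent of $t$. A separate energy estimate for the equation solved by $u_t-u_0$ yields $\|\nabla(u_t-u_0)\|\le\frac{|t|}{\alpha}\|\delta\bar A^\delta\|_{L^\infty(\Omega)}\|\nabla u_t\|\to 0$, i.e. strong continuity $u_t\to u_0$ in $H^1_0(\Omega)$.

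To identify the limit, I would let $w\in H^1_0(\Omega)$ be the unique solution of the linear sensitivity problem \eqref{eq:5microscaleresponse} (unique by ellipticity of $\bar A^\delta$), subtract it from the equation for $d_t$, and test with $d_t-w$; this gives $\|\nabla(d_t-w)\|\le\frac1\alpha\|\delta\bar A^\delta\|_{L^\infty(\Omega)}\|\nabla(u_t-u_0)\|\to0$, so $d_t\to w$ strongly and $\mathrm{D}u^\delta(\bar A^\delta)[\delta\bar A^\delta]=w$. Feeding this into the product rule for $\eta^\delta_K$—whose explicit part contributes $\big(\delta\bar A^\delta_K\nabla u^\delta,\nabla z^\varepsilon\big)_K$ and whose implicit part contributes $\big((\bar A^\delta-A^\varepsilon)\nabla w,\nabla z^\varepsilon\big)_K$—and combining with $\mathrm{D}(|\eta^\delta_K|^2)=2\eta^\delta_K\,\mathrm{D}\eta^\delta_K$ and the regularization derivative assembles precisely the claimed formula for $\mathrm{D}\mathcal{F}(\bar A^\delta)[\delta\bar A^\delta]$.

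The hard part is the middle step: differentiating the implicitly defined, nonlinear map $u^\delta(\bar A^\delta)$ and justifying the passage to the limit in the difference quotients. The linearity of \eqref{eq:5sidecond} in $u^\delta$ keeps the energy estimates clean, so the care really lies in (i) verifying that the perturbed coefficient remains uniformly elliptic for small $t$, guaranteeing existence of $u_t$ and uniformity of the bounds, and (ii) establishing the strong convergence $u_t\to u_0$ needed to replace $\nabla u_t$ by $\nabla u_0$ in the limiting equation for $w$. With $w=\mathrm{D}u^\delta(\bar A^\delta)[\delta\bar A^\delta]$ secured, the rest is a routine product- and chain-rule computation.
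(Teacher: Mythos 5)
Your proposal is correct and follows essentially the same route as the paper: both reduce the claim to the Gâteaux-differentiability of the state map $\bar A^\delta \mapsto u^\delta(\bar A^\delta)$, write down the variational equation satisfied by the difference quotient, and pass to the limit to identify \eqref{eq:5microscaleresponse}. Your version supplies the details (uniform ellipticity of the perturbed coefficient for small $t$, the uniform energy bound, strong convergence $u_t\to u_0$, and the estimate on $\|\nabla(d_t-w)\|$) that the paper compresses into the phrase ``by continuity,'' so it is, if anything, more complete.
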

%
\begin{proof}
  The crucial part is to assert that the side condition
  (\ref{eq:5sidecond}) interpreted as a functional dependency
  $\,u^\delta(\bar A^\delta)\,$ is Gâteaux-differentiable and its derivative is
  given by (\ref{eq:5microscaleresponse}). The rest of the statement then
  follows in a straightforward manner.
  Due to the fact that $\mathcal{A}^\delta$ is finite dimensional it
  suffices to show that the limit
  \begin{align}
    \label{eq:5limit}
    \lim_{s\searrow0}\,w_s, \quad w_s
    :=\frac1s\big( u^\delta(\bar A^\delta+\delta\bar A^\delta)
    -u^\delta(\bar A^\delta)\big)
  \end{align}
  is well-defined for arbitrary $\delta\bar A^\delta$. For this, we note
  that for $\,s\,$ sufficiently small, the difference $\,w_s\,$ is given by
  \begin{align}
    \big((\bar A^\delta+s\delta\bar A^\delta)\nabla(u^\delta(\bar
    A^\delta)+s w_s)\,,\nabla \varphi\big)
    =(f,\varphi)\quad\forall\varphi\in H^1(\Omega).
  \end{align}
  Equivalently,
  \begin{align}
    \label{eq:5limit2}
    \big(\bar A^\delta\nabla w_s\,,\nabla\varphi\big)
    +s\,\big(\delta\bar A^\delta\nabla w_s\,,\nabla\varphi\big)
    +\big(\delta\bar A^\delta u^\delta(\delta A^\delta)\,,\nabla\varphi\big)
    =0.
  \end{align}
  By continuity, it follows that the limit of (\ref{eq:5limit2}) for
  $\,s\to 0\,$ is well-defined and indeed given by
  (\ref{eq:5microscaleresponse}).
\end{proof}

\subsection{An efficient post-processing strategy}
\label{sec:5optprobpostprocessing}

The optimization problem (\ref{eq:5optprob}) can be used as an efficient
post-processing strategy that does not require---with the exception of an
initial model---any additional a priori knowledge of effective models. Fix
a macroscale and a sampling discretization $\mathbb{T}_{H}(\Omega)$ and
$\mathbb{T}_{\delta}(\Omega)$, as well as an initial effective coefficients
$\bar A^{\delta,0}$. This results in the following general
optimization strategy formulated for the case of a reduced, locally
enhanced approximation of the dual solution:

\begin{definition}[Reduced, locally enhanced model-optimization problem]
  \label{defi:5optprobmod}
  \mbox{}\\
  Let $\mathbb{T}_{\delta}(\Omega)$ be a fixed sampling mesh and
  $\mathbb{T}_{H}(\Omega)$ a fixed macroscale discretization. Fix a
  microscale discretization
  $\{\mathbb{T}_{h}(K)\,:\,K\in\mathbb{T}_{\delta}(\Omega)\}$ as well and
  let $\bar
  A^{\delta,0}:\mathbb{T}_{\delta}(\Omega)\to\mathbb{R}^{d\times
  d}$ be an initial effective model. The \emph{reduced, locally enhanced
  model-optimization problem} reads: Find a solution $\bar
  A^{\delta,\text{opt}}\in\mathcal{A}^\delta$ of
  \begin{multline}
    \label{eq:5optprobmod}
    \arginf_{\bar A^\delta\in\mathcal{A}^\delta}
    \sum_{K\in\,\mathbb{T}_{\delta}(\Omega)}\Big\{ \big|\big( (\bar
    A^\delta-A^\varepsilon)\nabla U(\bar A^\delta), \nabla\big(\tilde
    Z(\bar A^\delta)+ \tilde Z_K(\bar A^\delta)\big)\big)_K\big|^2
    \\
    + \alpha_K\big\|\,\bar A^\delta_K-\bar
    A^{\delta,0}_K\big\|^2_{\mathbb{R}^{d\times d}}\Big\},
  \end{multline}
  with $\,U,\,\tilde Z\in V^H(\Omega)$, and $\,\tilde Z_K\in V^h(K)\,$ 
  subject to the side conditions:
  \begin{align}
    \big(\bar A^\delta\nabla U(\bar A^\delta),\nabla\varphi\big)
    &=(f,\varphi)\quad\forall\varphi\,\in V^H(\Omega),
    \\[0.3em]
    \big(\bar A^\delta\nabla\varphi,\nabla\tilde Z(\bar A^\delta)\big)
    &=\langle j,\varphi\rangle \quad\forall\,\varphi\in V^H(\Omega),
    \label{eq:5localrec}
    \\[0.3em]
    \big(A^\varepsilon\nabla\varphi,\nabla\tilde Z(\bar A^\delta)
    +\nabla\tilde Z_K(\bar A^\delta)\big)_K
    &= \langle j,\varphi\rangle  \quad\forall\,\varphi\in V^h(K).
    \label{eq:5localenh}
  \end{align}
\end{definition}

Analogously to Proposition~\ref{prop:5gateaux} we formulate the following
result:
%
\begin{proposition}
  \label{prop:5gateaux2}
  Let $\,\tilde{\mathcal{F}}\,$ be the modified cost functional of
  (\ref{eq:5optprobmod}). Then, in full analogy of the result for
  $\,\mathcal{F}\,$ in Proposition~\ref{prop:5gateaux}, the functional
  dependency of $\,\tilde{\mathcal{F}}(\bar A^\delta)\,$ is also
  Gâteaux-differentiable and it holds true that
  \begin{multline}
    \mathrm{D}\tilde{\mathcal{F}}(\bar A^\delta)[\delta\bar A^\delta]=
    \mathrm{D}\mathcal{F}(\bar A^\delta,U,\tilde Z)[\delta\bar A^\delta]\;+
    \\
    \sum_{K\in\,\mathbb{T}_{\delta}(\Omega)} 2\eta^\delta_K\,\big((\bar
    A^\delta-A^\varepsilon)\nabla U,\nabla(\mathrm{D}\tilde
    Z+\mathrm{D}\tilde Z_K) (\bar A^\delta)[\delta\bar A^\delta]\big),
  \end{multline}
  with $\,\mathrm{D}\tilde Z\in V^H(K)\,$ being defined as the solution of
  \begin{align}
    \label{eq:5microscaleresponseZ}
    \big(\bar A^\delta\nabla\varphi,\nabla\mathrm{D}\tilde Z(\bar
    A^\delta)[\delta\bar A^\delta]\big) +\big(\delta\bar
    A^\delta\nabla\varphi,\nabla\tilde Z(\bar A^\delta)\big)
    =0
    \quad\forall\varphi\in V^H(\Omega),
  \end{align}
  and $\,\mathrm{D}\tilde Z_K\in V^h(K)\,$ solving
  \begin{multline}
    \label{eq:5microscaleresponseZK}
    \big(A^\varepsilon\nabla\varphi,\nabla\mathrm{D}\tilde Z_K(\bar
    A^\delta)[\delta\bar A^\delta]\big)_K
    +\big(A^\varepsilon\nabla\varphi,\nabla\delta\tilde Z(\bar
    A^\delta)[\delta\bar A^\delta]\big)_K
    =0
    \quad\forall\varphi\in V^h(K).
  \end{multline}
\end{proposition}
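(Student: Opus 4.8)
The plan is to follow the same two-stage pattern as in the proof of Proposition~\ref{prop:5gateaux}: first I would establish that each of the three solution maps $\bar A^\delta\mapsto U(\bar A^\delta)$, $\bar A^\delta\mapsto\tilde Z(\bar A^\delta)$, and $\bar A^\delta\mapsto\tilde Z_K(\bar A^\delta)$ defined by the side conditions is Gâteaux-differentiable with the derivatives claimed in the statement, and then I would assemble $\mathrm{D}\tilde{\mathcal{F}}$ by the product and chain rules. The essential new feature compared with Proposition~\ref{prop:5gateaux} is that here the reduced, locally enhanced dual solution $\tilde Z+\tilde Z_K$ itself depends on $\bar A^\delta$ through \eqref{eq:5localrec} and \eqref{eq:5localenh}, whereas in \eqref{eq:5optprob} the exact dual $z^\varepsilon$ was independent of $\bar A^\delta$; this extra dependence is precisely what produces the additional term in the asserted derivative.

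For the primal map $U$, differentiability and the defining equation for $w=\mathrm{D}U(\bar A^\delta)[\delta\bar A^\delta]$ are obtained verbatim as in Proposition~\ref{prop:5gateaux}. For the effective dual $\tilde Z$, I would run the same difference-quotient argument: since $V^H(\Omega)$ is finite dimensional and the bilinear form $(\bar A^\delta\nabla\,\cdot\,,\nabla\,\cdot\,)$ depends affinely on $\bar A^\delta$ while the right-hand side $\langle j,\,\cdot\,\rangle$ is fixed, the quotient $s^{-1}\big(\tilde Z(\bar A^\delta+s\,\delta\bar A^\delta)-\tilde Z(\bar A^\delta)\big)$ solves a perturbed linear problem whose limit as $s\searrow0$ exists by continuity and is characterized by \eqref{eq:5microscaleresponseZ}. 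The local enhancement $\tilde Z_K$ depends on $\bar A^\delta$ only through $\tilde Z$, since $A^\varepsilon$ and $\langle j,\,\cdot\,\rangle$ in \eqref{eq:5localenh} do not; the map $\tilde Z\mapsto\tilde Z_K$ is itself affine on the finite-dimensional space $V^h(K)$, so the chain rule yields differentiability of $\tilde Z_K$ with $\mathrm{D}\tilde Z_K$ solving the linearized problem \eqref{eq:5microscaleresponseZK}, whose data is exactly $-(A^\varepsilon\nabla\,\cdot\,,\nabla\mathrm{D}\tilde Z)_K$.

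With all three maps differentiable, I would assemble the derivative by noting that, in the reduced setting, the factor being squared is, in the notation of the statement, the local indicator $\eta^\delta_K=\big((\bar A^\delta-A^\varepsilon)\nabla U,\nabla(\tilde Z+\tilde Z_K)\big)_K$, so that $\mathrm{D}\big|\eta^\delta_K\big|^2=2\eta^\delta_K\,\mathrm{D}\eta^\delta_K$. Differentiating $\eta^\delta_K$ by the product rule produces three contributions: the explicit dependence through the factor $(\bar A^\delta-A^\varepsilon)$, the dependence through $\nabla U$ via $w$, and the dependence through $\nabla(\tilde Z+\tilde Z_K)$ via $\mathrm{D}\tilde Z+\mathrm{D}\tilde Z_K$. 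The first two, together with the derivative $2\alpha_K(\bar A^\delta_K-\bar A^{\delta,0}_K):\delta\bar A^\delta_K$ of the regularization term, are precisely the expression obtained in Proposition~\ref{prop:5gateaux} with $u^\delta,z^\varepsilon$ replaced by $U,\tilde Z+\tilde Z_K$, which is what the notation $\mathrm{D}\mathcal{F}(\bar A^\delta,U,\tilde Z)[\delta\bar A^\delta]$ abbreviates; the third contribution yields the remaining sum $\sum_K2\eta^\delta_K\big((\bar A^\delta-A^\varepsilon)\nabla U,\nabla(\mathrm{D}\tilde Z+\mathrm{D}\tilde Z_K)\big)$.

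The main obstacle, exactly as in Proposition~\ref{prop:5gateaux}, is the rigorous justification that the parametrized solution maps are differentiable and that the difference quotients converge; the one genuinely new point is the chained dependence of $\tilde Z_K$ on $\bar A^\delta$ through $\tilde Z$. Both are controlled by the finite dimensionality of $V^H(\Omega)$ and $V^h(K)$ together with the affine dependence of the three defining bilinear forms on $\bar A^\delta$ (respectively, for $\tilde Z_K$, on $\tilde Z$), so that once the single-step argument of Proposition~\ref{prop:5gateaux} is in place, the remaining estimates are routine and the product-rule bookkeeping is the only additional work.
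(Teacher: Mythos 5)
Your proposal is correct and follows essentially the same route as the paper, which simply notes that the first part is already covered by Proposition~\ref{prop:5gateaux} and that the additional terms arise from differentiating the side conditions \eqref{eq:5localrec} and \eqref{eq:5localenh}. Your write-up merely fills in the difference-quotient and chain-rule details that the paper leaves implicit.
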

%
\begin{proof}
  The first part of the statement is already proved in
  Proposition~\ref{prop:5gateaux}. The additional terms arise from the
  derivatives of Equations (\ref{eq:5localrec}) and (\ref{eq:5localenh}).
\end{proof}


\section{Implementational aspects}
\label{sec:4implementational}

The optimization problem (\ref{eq:5optprob}) and its modified variant
(\ref{eq:5optprobmod}) contain strongly nonlinear side conditions, where
computing the Gâteaux-derivative for a given direction $\delta\bar
A^\delta$ alone already involves solving the variational equation
(\ref{eq:5microscaleresponse}), and, depending on the reconstruction
approach, also (\ref{eq:5microscaleresponseZ}) and
(\ref{eq:5microscaleresponseZK}). Consequently, a straightforward
application of the \emph{Newton method} to solve the optimization problem
has to be avoided.


\subsection{Gauss-Newton Method}

In order to avoid computing the second order derivatives
$\mathrm{d}^2\mathcal{F}(\bar A^{\delta,i})$ a \emph{modified Gauss-Newton
method} \cite{Marquardt:1963} is used. For this, we
reformulate the optimization problem slightly. Introduce a multi-index
$(K,i,j)\in\mathbb{T}_{\delta}\times\mathbb{R}^{d\times d}$ and define the
vector-valued function
\begin{gather}
  \mathcal{G} \coloneqq
  \big\{\big(\eta_K\big)_K\,,\,\big(g_{Kij}\big)_{Kij}\big\},
  \quad\text{with}
  \\
  \eta_K \coloneqq
  \big((\bar A^\delta-A^\varepsilon)\nabla U(\bar
  A^\delta)\,,\nabla(\tilde Z+\tilde Z_K)\big)_K,
  \quad
  g_{Kij} \coloneqq \sqrt{\alpha_K}\,\big(\bar
  A^\delta_{K,ij}-\bar{A}^{\delta,0}_{K,ij}\big).
\end{gather}
%
\begin{lemma}
  The modified optimization problem (\ref{eq:5optprobmod}) can equivalently
  be expressed as the minimization of the squared \emph{Euclidian norm}
  $\,|\cdot|\,$ of $\mathcal{G}$:
  \begin{align}
    \arginf_{\bar A^\delta\in\mathcal{A}^\delta}
    \big|\mathcal{G}\,\big|^2
    \:=\:
    \arginf_{\bar A^\delta\in\mathcal{A}^\delta}
    \sum_{K\in\,\mathbb{T}_{\delta}(\Omega)}
    \Big\{
    \eta_K^2+
    \sum_{ij}g_{Kij}^2
    \Big\}.
  \end{align}
\end{lemma}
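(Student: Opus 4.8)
The plan is to unfold both sides of the claimed identity and check that they agree term by term. The statement is essentially a notational reformulation: it asserts that the cost functional $\tilde{\mathcal{F}}$ of the reduced optimization problem \eqref{eq:5optprobmod} coincides with $|\mathcal{G}|^2$, where $\mathcal{G}$ stacks the local residuals $\eta_K$ together with the scaled regularization components $g_{Kij}$. The main work is thus bookkeeping rather than analysis, so I would keep the argument short.

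First I would expand the right-hand side $|\mathcal{G}|^2$ using the definition of the Euclidean norm on the stacked vector. Since $\mathcal{G}$ collects the scalar entries $\{\eta_K\}_K$ and $\{g_{Kij}\}_{Kij}$, its squared Euclidean norm is simply the sum of squares of all entries, giving
\begin{align}
  |\mathcal{G}|^2 = \sum_{K\in\,\mathbb{T}_{\delta}(\Omega)} \eta_K^2
  + \sum_{K\in\,\mathbb{T}_{\delta}(\Omega)} \sum_{ij} g_{Kij}^2 .
\end{align}
Next I would substitute the explicit definitions. By definition $\eta_K = \big((\bar A^\delta - A^\varepsilon)\nabla U(\bar A^\delta), \nabla(\tilde Z + \tilde Z_K)\big)_K$, so $\eta_K^2$ reproduces exactly the first (residual) summand appearing inside the braces of \eqref{eq:5optprobmod}. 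Likewise, since $g_{Kij} = \sqrt{\alpha_K}\,(\bar A^\delta_{K,ij} - \bar A^{\delta,0}_{K,ij})$, squaring and summing over the matrix indices $i,j$ yields $\sum_{ij} g_{Kij}^2 = \alpha_K \sum_{ij}(\bar A^\delta_{K,ij} - \bar A^{\delta,0}_{K,ij})^2 = \alpha_K \|\bar A^\delta_K - \bar A^{\delta,0}_K\|^2_{\mathbb{R}^{d\times d}}$, which is precisely the regularization summand in \eqref{eq:5optprobmod}.

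Combining the two sums back under a single index $K$ then gives $|\mathcal{G}|^2 = \tilde{\mathcal{F}}(\bar A^\delta)$ pointwise for every admissible $\bar A^\delta \in \mathcal{A}^\delta$. Since the two objective functionals are identically equal as functions on $\mathcal{A}^\delta$, they share the same set of minimizers, and therefore the two $\arginf$ problems are equivalent. The only point requiring a word of care is that the Frobenius norm $\|\cdot\|_{\mathbb{R}^{d\times d}}$ used in \eqref{eq:5optprobmod} must be read as the entrywise sum of squares, so that it matches $\sum_{ij} g_{Kij}^2$; I would note this identification explicitly, as it is the one place where a mismatched normalization convention could silently break the equality. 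Beyond that, the proof is a direct verification and presents no genuine obstacle.
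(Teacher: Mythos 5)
Your proof is correct and is exactly the direct verification the paper has in mind: the paper states this lemma without any proof at all, treating it as an immediate consequence of the definitions of $\eta_K$ and $g_{Kij}$, and your term-by-term expansion supplies precisely that bookkeeping. Your explicit remark that $\|\cdot\|_{\mathbb{R}^{d\times d}}$ must be read as the Frobenius (entrywise) norm is the one genuinely worthwhile observation, since it is the only place a convention mismatch could break the identity.
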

%
For a given index $\,(K,i,j)\,$ let $\,\delta\bar
A^\delta(Kij)\,:\,\mathbb{T}_{\delta}\to\mathbb{R}^{d\times d}\,$ be defined
as the value
\begin{align}
  \big(\delta\bar A^\delta_Q\big)_{mn}:=\delta_{QK}\,
  \delta_{mi}\,\delta_{ni},
\end{align}
for a cell $\,Q\in\mathbb{T}_{\delta}(\Omega)$, where $\,\delta_{QK}\,$ is
the \emph{Kronecker delta}. Define the short notation
\begin{align}
  \text{D}_{Kij}\eta_Q &\coloneqq \mathrm{D}\eta_Q[\delta\bar
  A^\delta(Kij)],
  \\
  \mathrm{D}_{Kij}g_{Qmn} &\coloneqq \mathrm{D} g_{Qmn}(\bar A^\delta)
  [\delta\bar A^\delta(Kij)].
\end{align}
%
\begin{lemma}
  By virtue of Propositions~\ref{prop:5gateaux} and \ref{prop:5gateaux2} it
  holds that
  \begin{multline}
    \label{eq:5derivative}
    \mathrm{D}_{Kij}\eta_Q \;=\; \delta_{QK}
    \int_Q\nabla_{j}U\,\nabla_{i}(\tilde Z+\tilde Z_Q)\,\mathrm{d} x
    \\
    + \int_Q(\bar A^\delta-A^\varepsilon)\nabla\mathrm{D}_{Kij}U\cdot
    \nabla(\tilde Z+\tilde Z_Q)\,\mathrm{d} x
    \\
    + \int_Q(\bar A^\delta-A^\varepsilon)\nabla
    U\cdot\nabla\mathrm{D}_{Kij} (\tilde Z+\tilde Z_Q)\,\mathrm{d} x,
  \end{multline}
  as well as
  \begin{align}
    \mathrm{D}_{Kij}g_{Qmn}= \delta_{QK}\, \delta_{mi}\,
    \delta_{ni}\,\sqrt{\alpha_Q}.
  \end{align}
\end{lemma}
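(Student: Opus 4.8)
The plan is to read (\ref{eq:5derivative}) as a product rule for Gâteaux derivatives applied to the bilinear structure of $\eta_Q$, using the sensitivity equations already established in Propositions~\ref{prop:5gateaux} and \ref{prop:5gateaux2}. First I would write $\eta_Q = b_Q\big((\bar A^\delta - A^\varepsilon)\nabla U(\bar A^\delta),\,\nabla(\tilde Z + \tilde Z_Q)(\bar A^\delta)\big)$, where $b_Q(\cdot,\cdot) := (\cdot,\cdot)_Q$ is the bounded bilinear $L^2(Q)^d$ inner product. Here $\eta_Q$ depends on $\bar A^\delta$ through three factors: the affine coefficient difference $\bar A^\delta - A^\varepsilon$, the primal state $U(\bar A^\delta)$, and the locally enhanced dual $\tilde Z(\bar A^\delta) + \tilde Z_Q(\bar A^\delta)$. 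By Proposition~\ref{prop:5gateaux} the primal map $\bar A^\delta\mapsto U(\bar A^\delta)$ is Gâteaux-differentiable (with sensitivity characterized in its discrete form by (\ref{eq:5microscaleresponse})), and by Proposition~\ref{prop:5gateaux2} the dual map $\bar A^\delta\mapsto \tilde Z + \tilde Z_Q$ is Gâteaux-differentiable with sensitivities (\ref{eq:5microscaleresponseZ}) and (\ref{eq:5microscaleresponseZK}); I denote the corresponding directional derivatives in direction $\delta\bar A^\delta(Kij)$ by $\mathrm{D}_{Kij}U$ and $\mathrm{D}_{Kij}(\tilde Z + \tilde Z_Q)$.

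With this setup the proof is a product rule: differentiating $b_Q(\cdot,\cdot)$ along the ray $\bar A^\delta + s\,\delta\bar A^\delta(Kij)$ produces exactly three contributions, one for each factor. The second and third terms of (\ref{eq:5derivative}) are immediate, namely $b_Q$ evaluated with $\nabla U$ replaced by $\nabla\mathrm{D}_{Kij}U$ and with $\nabla(\tilde Z + \tilde Z_Q)$ replaced by $\nabla\mathrm{D}_{Kij}(\tilde Z + \tilde Z_Q)$, respectively. The first term comes from differentiating the affine factor. Since $\bar A^\delta\mapsto\bar A^\delta-A^\varepsilon$ has derivative equal to the perturbation $\delta\bar A^\delta(Kij)$ itself, and this perturbation is supported on the single cell $K$ (yielding the prefactor $\delta_{QK}$) and carries a single nonzero matrix entry at the designated index, contracting that entry against $\nabla U$ and $\nabla(\tilde Z+\tilde Z_Q)$ selects the $j$-th and $i$-th partial derivatives and gives $\delta_{QK}\int_Q\nabla_j U\,\nabla_i(\tilde Z+\tilde Z_Q)\,\mathrm{d}x$, as claimed.

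For $g_{Qmn} = \sqrt{\alpha_Q}\,(\bar A^\delta_{Q,mn} - \bar A^{\delta,0}_{Q,mn})$ the computation is trivial: $g_{Qmn}$ is affine in $\bar A^\delta$, so its Gâteaux derivative in direction $\delta\bar A^\delta(Kij)$ equals $\sqrt{\alpha_Q}$ times the $(m,n)$ component of that direction, which is nonzero only for $Q = K$ at the designated matrix index. This is precisely the stated Kronecker-delta expression.

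The one step I would write out with care is the justification of the product rule at the Gâteaux level, since the two inner factors are themselves solutions of $\bar A^\delta$-dependent (discrete) PDEs. Expanding $b_Q\big(p(s), q(s)\big) - b_Q\big(p(0), q(0)\big)$ with $p(s) := (\bar A^\delta + s\,\delta\bar A^\delta - A^\varepsilon)\nabla U(\bar A^\delta + s\,\delta\bar A^\delta)$ and $q(s) := \nabla(\tilde Z + \tilde Z_Q)(\bar A^\delta + s\,\delta\bar A^\delta)$, bilinearity of $b_Q$ splits the difference quotient into $b_Q\big(s^{-1}(p(s)-p(0)),\, q(0)\big) + b_Q\big(p(s),\, s^{-1}(q(s)-q(0))\big)$. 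Boundedness of $b_Q$ together with the convergence of the difference quotients of $U$ and $\tilde Z + \tilde Z_Q$ (from Propositions~\ref{prop:5gateaux} and \ref{prop:5gateaux2}) guarantees that both limits exist as $s\searrow 0$ and that $s^{-1}(p(s)-p(0)) \to \delta\bar A^\delta\nabla U + (\bar A^\delta - A^\varepsilon)\nabla\mathrm{D}_{Kij}U$. The three terms of (\ref{eq:5derivative}) then follow, and the index contractions producing the explicit first term are routine.
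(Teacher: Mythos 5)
Your proposal is correct and follows exactly the route the paper intends: the lemma is stated as a direct corollary of Propositions~\ref{prop:5gateaux} and \ref{prop:5gateaux2}, and your product-rule expansion of the bilinear form $(\cdot,\cdot)_Q$ over the three $\bar A^\delta$-dependent factors, together with the observation that the perturbation $\delta\bar A^\delta(Kij)$ is supported on the single cell $K$ with one nonzero matrix entry, is precisely the computation the paper leaves implicit. Your careful justification of the product rule via the split difference quotient is a welcome addition but does not change the substance of the argument.
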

%
With these prerequisites at hand, a modified Gauss-Newton iteration
following a discussion by Levenberg and Marquardt~\cite{Marquardt:1963} is
defined:
%
\begin{definition}[Gauss-Newton iteration]
  Let $\mathcal{J}$ be the Jacobian matrix of $\mathcal{G}$,
  \begin{align}
    \mathcal{J}=\Big\{
    \big(\mathrm{D}_{Kij}\eta_Q\big)^{Kij}_Q\,,\,
    \big(\mathrm{D}_{Kij}\tilde g_{Qmn}\big)^{Kij}_{Qmn}
    \Big\}.
  \end{align}
  Given a penalty $\,\lambda\ge0\,$ and starting from an initial effective
  model $\,\bar A^{\delta,0}\,$ the modified Gauss-Newton
  iteration reads
  \begin{align}
    \label{eq:5levenberg}
    \begin{cases}
      \bar A^{\delta,n+1}\gets\bar A^{\delta,n}
      +\delta\bar A^{\delta,n},
      \\[0.5em]
      \big(\mathcal{J}\mathcal{J}^T
      (\bar A^{\delta,n}) + \lambda\, \text{Id}\,\big)
      \delta\bar A^{\delta,n} =
      -\mathcal{J}\mathcal{G}^T(\bar A^{\delta,n}).
    \end{cases}
  \end{align}
\end{definition}
%
The penalization term $\,\lambda\,\text{Id}\,$ acts as a damping term in the
Gauss-Newton method to stabilize the iteration and to reduce the influence
of approximation errors of the Jacobian $\mathcal{J}$. Depending on the
situation, it will be chosen between $0-1$ times the mean value of the
diagonal elements of $\,\mathcal{J}\mathcal{J}^T$. In general, the number
of Gauss-Newton iterations will depend on the problem and sampling
strategy, as well as the strength of the regularization.


\subsection{Reduction of computational complexity}

The computationally expensive part of computing the Jacobi matrix
$\,\mathcal{J}\,$ are the non-local responses $\,\mathrm{D}_{Kij}U$,
$\,\mathrm{D}_{Kij}\tilde Z$, and $\,\mathrm{D}_{Kij}\tilde Z_Q\,$ that 
have to be computed for each choice $\,(K,i,j)\,$ individually according to
(\ref{eq:5microscaleresponse}), (\ref{eq:5microscaleresponseZ}), and
(\ref{eq:5microscaleresponseZK}). Another aspect that has to be kept in
mind is the fact that $\,\mathcal{J}\mathcal{J}^T\,$ is actually a dense matrix
of size $\,N\times N\,$ with
$\,N=\big|\mathbb{T}_{\delta}(\Omega)\big|\,(1+d^2)$. Storing such a matrix,
even for moderate sizes of the sampling mesh $\,\mathbb{T}_{\delta}(\Omega)$,
is computationally infeasible. Thus, a reduction strategy to efficiently
approximate $\,\mathcal{J}\,$ is necessary.

The microscale response $\,\mathrm{D}_{Kij}U\,$ is given by
(cf.~Equation~\ref{eq:5microscaleresponse}):
\begin{align}
  \big(\bar A^\delta\nabla\mathrm{D}_{Kij}U\,,\nabla\varphi\big)
  =
  -\int_K\nabla_{ j}U\nabla_{ i}\varphi\,\mathrm{d} x
  \quad\forall\varphi\in V^H(\Omega).
\end{align}
The right hand side of this equation is highly localized. Consequently, the
contribution of
\begin{align}
    \int_Q(\bar A^\delta-A^\varepsilon)\nabla\mathrm{D}_{Kij}U\cdot
    \nabla(\tilde Z+\tilde Z_Q)\,\mathrm{d} x
\end{align}
rapidly decreases the farther $\,Q\,$ is away from $\,K\,$---and can be neglected
at some point. A sensible compromise is, for example, to compute the above
contribution only for the case $\,K=Q$, or alternatively, as a more precise
strategy, only if $\,Q\,$ belongs to a small patch around $\,K$, e.\,g., if
$\,\overline K\cap\overline Q\neq\emptyset$. All of these choices result in a
block diagonal matrix $\,\tilde{\mathcal{J}}\,$ whose \emph{band size} is
independent of $\,|\mathbb{T}_{\delta}(\Omega)|$.

In contrast the microscale response $\mathrm{D}\tilde Z$, $\mathrm{D}\tilde Z_Q$ 
will just be neglected entirely:
\begin{align}
  \label{eq:5neglected}
  \int_Q(\bar A^\delta-A^\varepsilon)\nabla U\cdot\nabla\mathrm{D}_{Kij}
  (\tilde Z+\tilde Z_Q)\,\mathrm{d} x \;\;\approx\;\;0.
\end{align}
The reasoning behind this choice is the fact that in case of a fully
resolved dual solution $z^\varepsilon$, such finescale response does not
exist at all. To further justify this approach, we will discuss detailed
numerical comparisons of optimization results obtained by $z^\varepsilon$
and $z^\delta+\sum z^\delta_K$, respectively, in
Section~\ref{sec:5numerical}.

In summary, the following approximation strategies of the derivative
$\,\mathrm{D}_{Kij}\eta_Q\,$ will be considered:
%
\begin{definition}[Approximative Jacobian]
  Define a patch
  $\omega(K):=\{Q\in\mathbb{T}_{\delta}(\Omega)\,:\,\overline
  K\cap\overline Q\neq\emptyset\}$ and let $\,I_{ Q\omega(K)}\,$ be the 
  indicator function that is equal to $\,1\,$ for $\,Q\in\omega(K)\,$ 
  and $\,0\,$ otherwise. The
  derivative $\,\mathrm{D}_{Kij}\eta_Q\,$ is  approximated by
  \begin{multline}
    \label{eq:5approx3}
    \mathrm{D}_{Kij}\eta_Q \;\approx\; \delta_{ QK}
    \int_Q\nabla_{ j}U\,\nabla_{ i}(\tilde Z+\tilde Z_Q)\,\mathrm{d} x
    \\
    + I_{ Q\omega(K)}
    \int_Q(\bar A^\delta-A^\varepsilon)\nabla\mathrm{D}_{Kij}U\cdot
    \nabla(\tilde Z+\tilde Z_Q)\,\mathrm{d} x.
  \end{multline}
\end{definition}
%
One last obstacle for the patch-centered reconstruction (\ref{eq:5approx3})
remains. Namely, that the response $\,D_{Kij}U\,$ is needed in combination with
the microscale reconstruction $\,\tilde Z_Q\,$ for different $K$ and $\,Q$. In an
efficient algorithm, fine-scale reconstructions of such kind cannot be
stored for further use. They have to be kept local to the computation on
the current sampling region $\,Q$. One way to mitigate this problem is to not
use a finescale reconstruction $Z_Q$ defined on $\,Q$, but to use a slightly
more expensive $\,\tilde Z_{\omega(K)}\,$ defined on the patch $\,\omega(K)\,$
around $\,K\,$ with patch-depth $\,1$. This allows for an efficient assembly as
described in Algorithm~\ref{alg:5assembly}. Finally, a model-optimization
algorithm can be defined; see Algorithm~\ref{alg:5modeloptimization}.
%
\begin{algorithm}[t]
  \DontPrintSemicolon
  -- Set up $\mathbb{T}_{H}(\Omega)$ and assemble matrix $A$:
  $A_{\nu\mu}=(\bar A^{\delta,i}\nabla\varphi_\mu\,,\nabla\varphi_\nu)$\;
  -- Compute matrix decomposition of $A$: $LU=A$\;
  \For{$K\in\mathbb{T}_{\delta}(\Omega)$}{
    -- Assemble $\omega(K)$ and compute $\tilde Z_K\in V^h(\omega(K))$\;
    -- Compute $\eta^\delta_K$ with (\ref{eq:4etadapprox})

    \For{$i=1,\,\ldots,\,d$}{
      \For{$j=1,\,\ldots,\,d$}{
        -- Compute response $\mathrm{D}_{Kij}U$ with above decomposition\;
        \For{$Q\in\omega(K)$}{
          -- Compute contribution $\mathrm{D}_{Kij}\eta_Q$ for
          $\mathcal{J}$ according to (\ref{eq:5approx3})\;
        }
      }
    }
  }
  \caption{Assembly of $\{\eta_K\}$ and $\mathcal{J}$}
  \label{alg:5assembly}
\end{algorithm}
%
\begin{algorithm}[t]
  \DontPrintSemicolon
  -- Compute initial model $\bar A^{\delta,0}$\;
  \vspace{0.2em}
  -- Solve primal and dual problem for $U(\bar A^{\delta,0})$,
  $\tilde Z(\bar A^{\delta,0})$ with the help of (\ref{eq:3probdisc})\;
  \vspace{0.5em}
  \While{stopping criterion not reached}{
    \vspace{0.5em}
    -- Compute the error estimator and local indicators $\{\eta_K\}$
    \begin{align*}
      \tilde\theta^\delta=\sum_{K\in\,\mathbb{T}_{\delta}(\Omega)}
      \eta^\delta_K,
    \end{align*}
    as well as, the Jacobian $\mathcal{J}$ with
    Algorithm~\ref{alg:5assembly}\;
    \vspace{0.5em}
    -- Solve
      $\big(\mathcal{J}\mathcal{J}^T(\bar A^{\delta,n}) +
      \lambda \text{Id}\,\big) \delta\bar A^{\delta,n} =
      -\mathcal{J}\mathcal{G}^T(\bar A^{\delta,n}).$\;
    \vspace{0.5em}
    -- Update model:
    $\bar A^{\delta,n+1}\gets\bar A^{\delta,n}
    +\delta\bar A^{\delta,n}$\;
    \vspace{0.5em}
    -- Compute $U(\bar A^{\delta,n+1})$, $\tilde Z(\bar A^{\delta,n+1})$
    again with updated model $\bar A^{\delta,n+1}$, (\ref{eq:3probdisc})\;
  }
  \caption{Model-optimization algorithm}
  \label{alg:5modeloptimization}
\end{algorithm}
%
\begin{remark}
  Due to the fact that $\mathcal{J}$ is always approximated with a
  substantially reduced variant, the value $\|\mathcal{J}\|$ does not
  provide a good stopping criterion with $\|\mathcal{J}\|\ll1$. Instead, it
  is better to use the approximative estimator value
  $|\tilde\theta^\delta|$ directly. For example, stop if
  $|\tilde\theta^\delta|$ is reduced to $1\,\%$ of its initial value.
\end{remark}


\section{Numerical tests}
\label{sec:5numerical}

A series of short numerical tests is conducted in order to examine specific
behavior and aspects of the model-optimization approach that was proposed
in the previous sections. The computations are done with the finite-element
toolkit deal.II \cite{Bangerth:2016}.

In particular, the dependence of the optimization result on the initial
value $\;\bar A^{\delta,0}$, on the size of the sampling discretization
$\;\mathbb{T}_{\delta}(\Omega)$, and on the strength of the regularization
parameters $\;\alpha_K\;$ is examined for a global functional, as well as a
local variant (Subsection~\ref{subse:5modelrandom}). This is done with a
choice of random coefficients for both, the fully resolved dual solution
$\;z^\varepsilon$, as well as the reduced, locally enhanced variant
$\;z^\delta+\sum z^\delta_K$.  Finally, Subsection~\ref{sec:5transport}
concludes with an advection-diffusion example with dominant transport.


\subsection{Parameter study for random coefficients}
\label{subse:5modelrandom}

\begin{figure}
  \footnotesize
  \centering
  \begin{minipage}[b]{0.4\textwidth}
    \centering
    (a)\;
    \includegraphics[height=3.5cm] {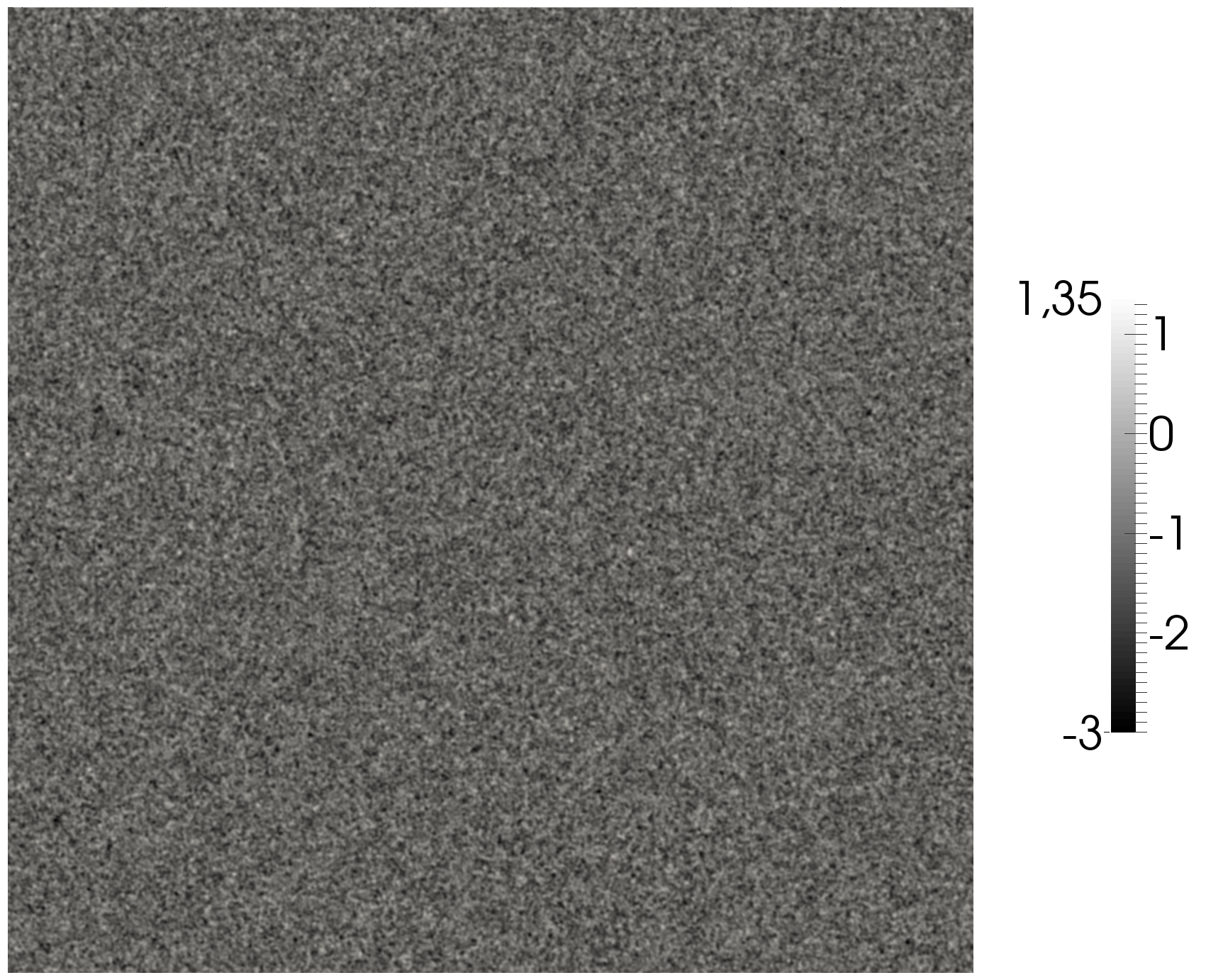}
  \end{minipage}
  \begin{minipage}[b]{0.4\textwidth}
    \centering
    (b)\;
    \includegraphics[height=3.5cm] {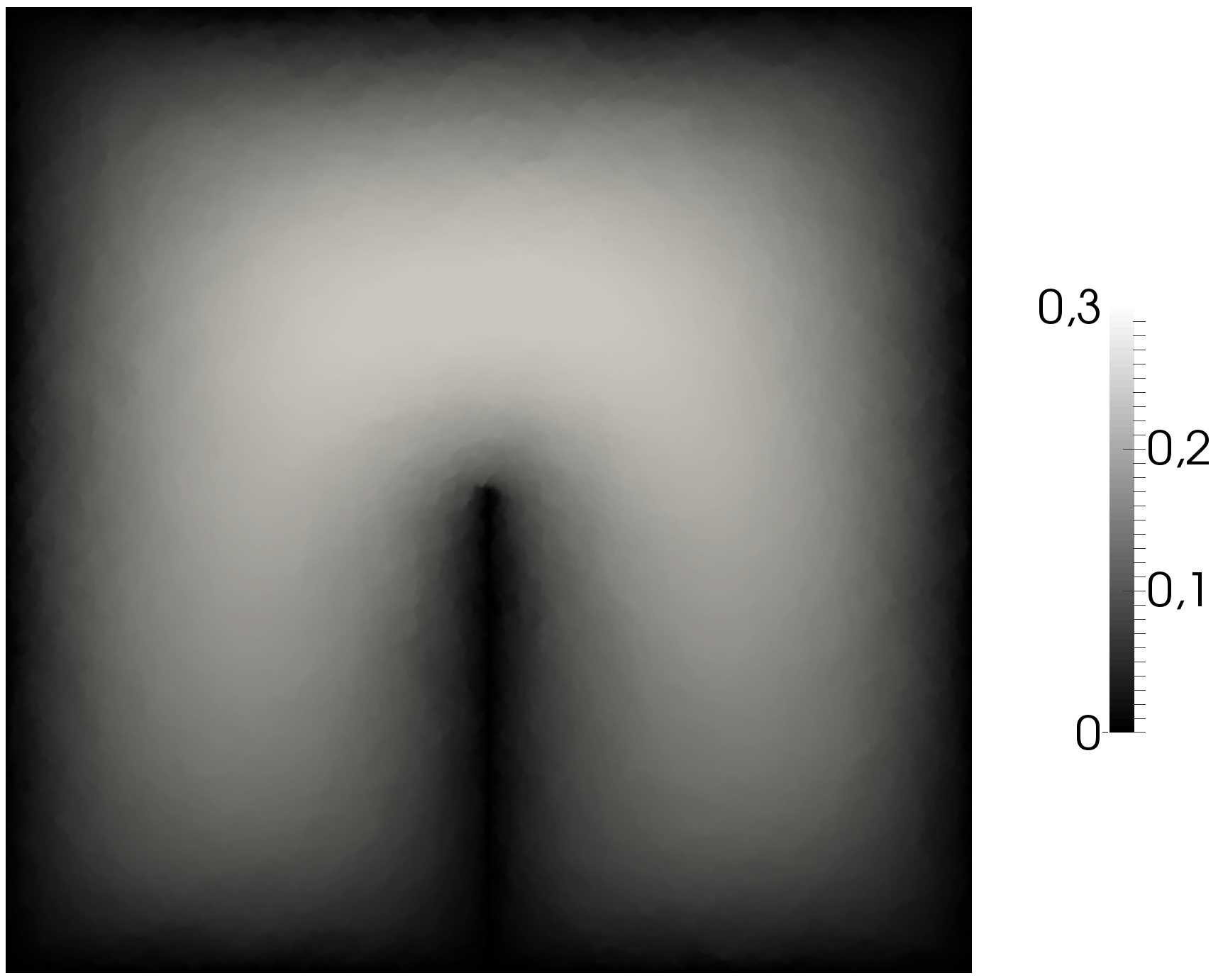}
  \end{minipage}

  \caption{%
  (a) Log-normally distributed permeability with Gaussian correlation shown
  in log-scale. (b) The corresponding reference solution.}
  \label{fig:5quantim}
\end{figure}

The purpose of the first numerical test is to examine the stability of the
optimization approach for a variety of differently chosen discretization
and optimization parameters. In particular, the feasibility of using the
reduced, locally enhanced approximation approach within the optimization
framework shall be assessed as this property is essential for the
optimization approach to be computationally feasible and thus comparable to
VMM or HMM approaches.

Consider a computational domain $\;\Omega=(0,1)^2\;$ with a log-normally
distributed, random microstructure. In detail, we choose $\;A^\varepsilon\;$ 
to be
\begin{align}
   A^\varepsilon(x) := I_d\times\gamma\times\exp(10\times g(x)\,/\,255)\,I_d\,,
\end{align}
where $\,g(x)\,$ is an 8 bit grayscale picture (with integral values
between $\;0\;$ and $\,255$) with $\,1024\!\times\!1024\,$ pixels resolution
(cf.~Fig.~\ref{fig:5quantim}). The grayscale picture is generated using the
{\tt QuantIm} library \cite{Vogel:2008}. It is a (discrete) Gaussian random
field with an additional Gaussian correlation with a correlation length
chosen to be $\,r=0.0025$.

Further, define a global and a local functional as follows:
\begin{align}
  \label{eq:functionals}
  \langle j_1,\varphi\rangle=\int_\Omega\varphi\,\mathrm{d} x,
  \quad
  \langle j_2,\varphi\rangle=\varphi(x_0).
\end{align}
For a fixed choice of $\;65.5\,K\;$ macrocells and a microscale resolution 
of $\,h=2^{-12}$, a parameter study is conducted with sampling discretizations
$\,\delta=2^{-3}\;$ and $\,\delta=2^{-4}$, a choice of mild penalty with
$\,\lambda=0.1m\,$ and regularization $\,\alpha_K=0.001\,$ and strong 
penalty $\,\lambda=1.0m\,$ and regularization $\,\alpha_K=0.01$, where 
$\,m\,$ is the absolute mean value of the diagonal entries of the matrix
$\,\mathcal{J}^T\mathcal{J}$, see (\ref{eq:5levenberg}). The optimization
algorithm is run for the optimization strategy with precise approximative
Jacobian (\ref{eq:5approx3}) for both types of reconstruction approaches
for the dual solution: fully resolved $\,z^\varepsilon\,$ and the reduced,
local enhanced variant $\,z^\delta+\sum z^\delta_K$. With reference values of
$\,\langle j_1,u^\varepsilon_{\text{ref}}\rangle\approx0.14641\,$ and $\,\langle
j_2,u^\varepsilon_{\text{ref}}\rangle\approx0.189403\,$ the initial model
errors are in the range of around $\,1\,\%\,$ for the geometric average.

For each choice of parameters, Table~\ref{tab:5modelproblemj1} shows the
final error after a fixed number of $\,15\,$ optimization cycles for periodic and
random coefficients. The first observation that can be made is that in all
cases the model-optimization approach is able to consistently reduce well
below $\,1\,\%$. More importantly, the reduced, locally enhanced variant
$\,z^\delta+\sum z^\delta_K\,$ with increased patch size (and thus reduced
impact of the artificial Dirichlet boundary conditions of the
reconstruction problems) leads to comparable results very similar to the
results for the full variant $\,z^\varepsilon$.

\begin{table}[tbp]
  \center
  \caption{Parameter study for a random permeability and the global
    functional $j_1$, as well as the local functional $j_2$. For each choice
    the absolute and relative error after cycle 15 of the optimization
    algorithm is shown.}
  \label{tab:5modelproblemj1}
  \medskip

  \subfloat[Full model-optimization (\ref{eq:5approx3}), geometric average
    $\bar A^{\delta,0}$, global functional $j_1$]{
    \footnotesize
    \begin{tabular} {l l c c c c}
      \toprule
      & & \multicolumn{2}{c}{$z^\varepsilon$} & \multicolumn{2}{c}{$z^\delta+\sum z^\delta_K$} \\
      \cmidrule(lr){3-4} \cmidrule(lr){5-6}
      & cycle & $\delta=2^{-3}$ & $\delta=2^{-4}$ & $\delta=2^{-3}$ & $\delta=2^{-4}$ \\
                          & 1  & 1.8e-3 (1.3\,\%) & 1.3e-3 (0.9\,\%) &
                          1.8e-3 (1.3\,\%) & 1.3e-3 (0.9\,\%) \\[0.5em]
      $\alpha_K=10^{-2}$  & 15 & 7.5e-4 (0.5\,\%) & 4.2e-4 (0.3\,\%) & 6.7e-4 (0.5\,\%) & 3.7e-4 (0.3\,\%) \\
      $\alpha_K=10^{-3}$  & 15 & 8.0e-4 (0.6\,\%) & 7.9e-4 (0.5\,\%) & 7.1e-4 (0.5\,\%) & 7.2e-4 (0.5\,\%) \\
      \bottomrule
    \end{tabular}
  }

  \subfloat[Full model-optimization (\ref{eq:5approx3}), geometric average
    $\bar A^{\delta,0}$, local functional $j_2$]{
    \footnotesize
    \begin{tabular} {l l c c c c}
      \toprule
      & & \multicolumn{2}{c}{$z^\varepsilon$} & \multicolumn{2}{c}{$z^\delta+\sum z^\delta_K$} \\
      \cmidrule(lr){3-4} \cmidrule(lr){5-6}
      & cycle & $\delta=2^{-3}$ & $\delta=2^{-4}$ & $\delta=2^{-3}$ & $\delta=2^{-4}$ \\
                          & 1  & 2.1e-3 (1.1\,\%) & 2.2e-3 (1.2\,\%) &
                          2.1e-3 (1.1\,\%) & 2.2e-3 (1.2\,\%) \\[0.5em]
      $\alpha_K=10^{-2}$  & 15 & 6.0e-4 (0.3\,\%) & 6.2e-4 (0.3\,\%) & 8.7e-4 (0.5\,\%) & 9.5e-4 (0.5\,\%) \\
      $\alpha_K=10^{-3}$  & 15 & 6.9e-4 (0.4\,\%) & 7.2e-4 (0.4\,\%) & 9.5e-4 (0.5\,\%) & 1.0e-3 (0.5\,\%) \\
      \bottomrule
    \end{tabular}
  }

\end{table}


\subsection{An advection-diffusion example with dominant transport}
\label{sec:5transport}

As second test case consider an \emph{advection-diffusion} problem
\begin{align}
  \label{eq:5advdiff}
  \gamma\,(\nabla
  u^\varepsilon,\nabla\varphi)+(\boldsymbol{b^\varepsilon}\cdot \nabla
  u^\varepsilon,\varphi)=(f,\varphi) \quad\forall\varphi\in V
\end{align}
driven by a divergence-free vector field $\,\boldsymbol{b^\varepsilon}\in
H^{1,\infty}(\Omega)^d$, i.\,e. $\,\nabla\cdot\boldsymbol{b^\varepsilon}=0\,$
a.\,e. on $\,\Omega\,$ and $\,\boldsymbol{b^\varepsilon}\equiv0\,$ on
$\,\partial\Omega$, together with a positive scaling factor
$\,\gamma\in\mathbb{R}^+$. This time, the multiscale character is given by
$\,\boldsymbol{b^\varepsilon}\,$ that shall consist of small (but strong)
eddies. We again use the {\tt QuantIm} library \cite{Vogel:2008} to
construct a random, divergence-free vector field
$\,\boldsymbol{b^\varepsilon}(\boldsymbol{x})$; see \cite{Maier:2015}.
For a given sampling discretization $\,\mathbb{T}_{\delta}(\Omega)$, we define
an averaged transport coefficient
\begin{align}
  \boldsymbol{b^\delta}\,:\,\mathbb{T}_{\delta}(\Omega)\to\mathbb{R}^d,
  \quad
  \boldsymbol{b^\delta_K}:=\fint_K\boldsymbol{b^\varepsilon}\,\mathrm{d}
  x\quad\text{for }K\in\mathbb{T}_{\delta}(\Omega).
\end{align}

The random advection field influences the macroscopic diffusion in two
ways. Firstly, an averaged macroscopic transport occurs (as described by
$\,\boldsymbol{b^\delta}$). Secondly, and more importantly, the microscopic
eddies lead to influence the macroscopic behavior by means of an additional
\emph{effective diffusivity}. Consequently, let the task be to find
effective (diffusion) coefficients $\,\bar
A^\delta:\,\mathbb{T}_{\delta}(\Omega)\to\mathbb{R}^{d\times d}\,$ such that
the solution $u^\delta$ of the \emph{effective advection-diffusion problem}
\begin{align}
  (\bar A^\delta\nabla
  u^\delta,\nabla\varphi)+(\boldsymbol{b^\delta}\cdot\nabla
  u^\delta,\varphi)=(f,\varphi)\quad\forall\varphi\in V,
\end{align}
is a good approximation of $u^\varepsilon$ in some quantity of interest.

The only significant change in the model-adaptation framework for the above
advection-diffusion problem is the occurrence of additional terms
$(\boldsymbol{b^\varepsilon}\cdot\nabla u^\varepsilon,z^\varepsilon)$ in
the error identity (\ref{eq:4erroridentity}) that now splits into
\begin{multline}
  \label{eq:5erroridentityadvdif2}
  \langle j,u^\varepsilon\rangle  - \langle j,U\rangle
  = \underbrace{\big(f,z^\delta\big) -
  \big(A^\delta\nabla U,\nabla z^\delta\big) -
  \big(\boldsymbol{b^\delta}\cdot\nabla U,z^\delta\big)}_{=:\,\theta^H}
  \\
  +\underbrace{\big(A^\delta\nabla u^\delta,\nabla z^\varepsilon\big) -
    \gamma\,\big(\nabla u^\delta,\nabla z^\varepsilon\big)-
    \big((\boldsymbol{b^\varepsilon}-\boldsymbol{b^\delta})\cdot\nabla
  u^\delta,z^\varepsilon\big)}_{=:\,\theta^\delta}
\end{multline}
This leads to a local model-error indicator
\begin{align}
  \eta^\delta_K\coloneqq \big(\{\gamma\,\text{Id}-A^\delta\}\nabla
  u^\delta,\nabla z^\varepsilon\big)_K- \big(
  (\boldsymbol{b^\varepsilon}-\boldsymbol{b^\delta})\cdot\nabla
  u^\delta,z^\varepsilon\big)_K.
\end{align}
With the above assumptions on $\,\boldsymbol{b^\varepsilon}\,$ the corresponding
dual problem reads
\begin{align}
  \gamma\,(\nabla\varphi,\nabla z^\varepsilon)-(\boldsymbol{b^\varepsilon}\cdot
  \nabla z^\varepsilon,\varphi)=\langle
  j,\varphi\rangle \quad\forall\varphi\in V.
\end{align}

\begin{figure}
  \centering
  \subfloat[Domain $\Omega$] {
    \begin{tikzpicture}[scale=2]
      \draw[white] (-0.35,-0.35) rectangle (1.35,2.35);
      \draw[very thick] (0,0) rectangle (1,2);
      \draw[thick] (-0.075,1) -- (0.075,1);
      \node at (0.5,1) {$\Omega$};
      \node at (1.25,1) {$\Gamma_A$};
      \node at (-0.25,0.5) {$\Gamma_D$};
      \node at (-0.25,1.5) {$\Gamma_C$};
      \node at (0.5,2.23) {$\Gamma_B$};
      \node at (0.5,-0.23) {$\Gamma_E$};
    \end{tikzpicture}
  }
  \subfloat[$\boldsymbol{b^\varepsilon}$, $x$-comp.] {
    \begin{tikzpicture}[scale=2]
      \draw[white] (-0.5,-0.35) rectangle (1.5,2.35);
      \node at (0.5,1) {\includegraphics[height=4.0cm]{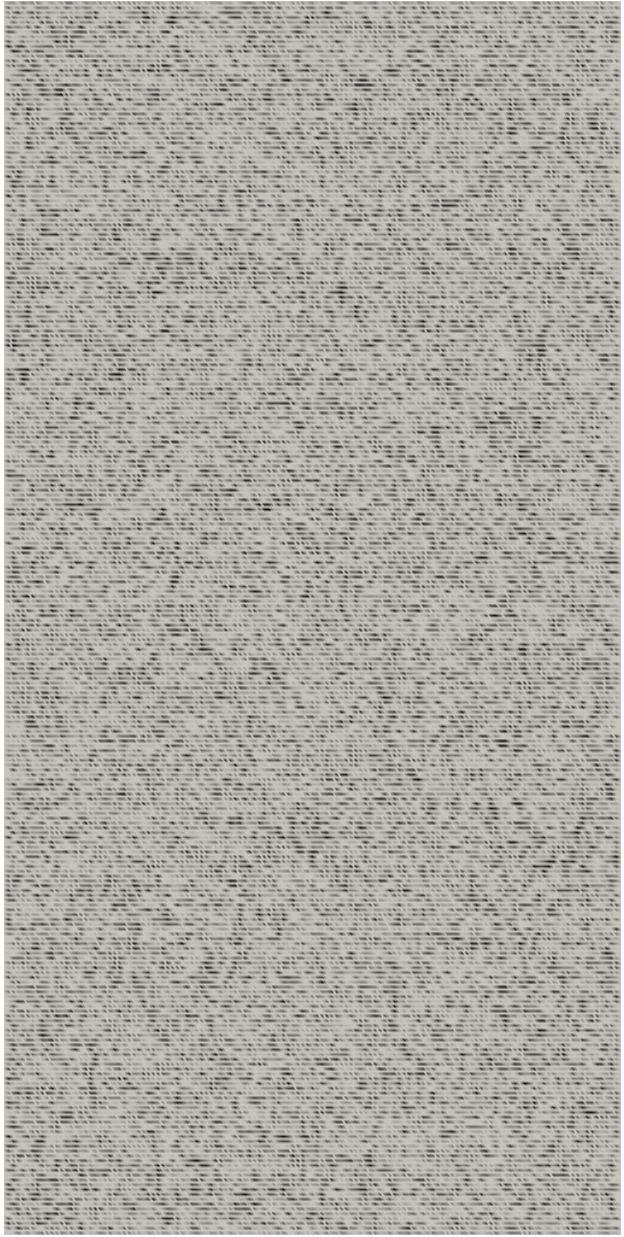}};
    \end{tikzpicture}
  }
  \subfloat[$\boldsymbol{b^\varepsilon}$, $y$-comp.] {
    \begin{tikzpicture}[scale=2]
      \draw[white] (-0.5,-0.35) rectangle (1.5,2.35);
      \node at (0.5,1) {\includegraphics[height=4.0cm]{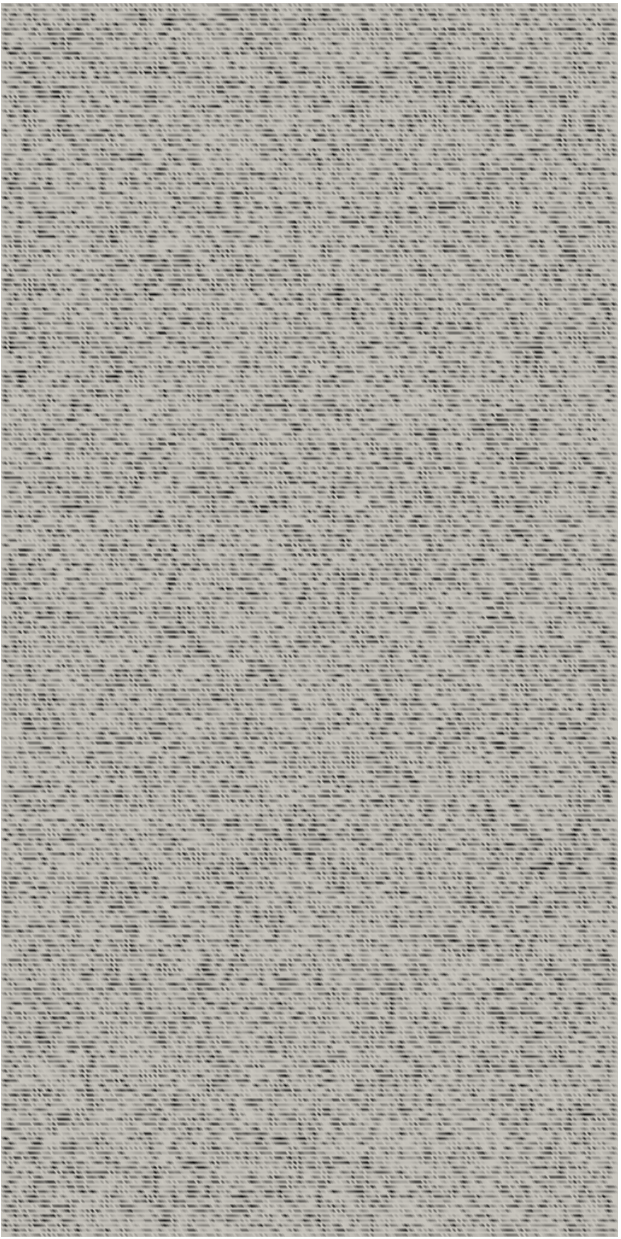}};
    \end{tikzpicture}
  }
  \caption{The computational domain $\Omega$ for the advection-diffusion
    test case (a) and the random vector field $\boldsymbol{b^\varepsilon}$ (b/c).
  }
  \label{fig:5domainadvdiff}
\end{figure}

A rectangular domain $\Omega$ is chosen (see
Fig.~\ref{fig:5domainadvdiff}) with homogeneous Dirichlet boundary
conditions on $\Gamma_D$, homogeneous Neumann conditions on $\Gamma_A$,
$\Gamma_B$ and $\Gamma_C$, and
$\,\gamma\,\partial_{\boldsymbol{n}}u^\varepsilon\equiv1\,$ on $\Gamma_E$.
The source term is set to $f\equiv0$ and the quantity of interest is chosen
to be
\begin{align}
  \langle j,\varphi\rangle=\int_{\Gamma_B}\varphi\,\mathrm{d} o_x.
\end{align}

In spirit of Definition~\ref{defi:4localenhancement}, a reduced dual
problem with a local enhancement can be defined
\begin{align}
  \big(\bar A^\delta\nabla\varphi,\nabla
  z^\delta\big)-(\boldsymbol{b^\delta}\cdot\nabla z^\delta,\varphi)
  &=\langle j,\varphi\rangle\quad\forall\varphi\in V,
  \\[0.5em]
  \gamma\,\big(\nabla\varphi,\nabla(z^\delta+z^\delta_K)\big)_K-
  \big((\boldsymbol{b^\varepsilon}-\boldsymbol{b^\delta})\cdot\nabla
  (z^\delta+z^\delta_K),\varphi\big)_K
  &=\langle j,\varphi\rangle\quad\forall\varphi\in V(K).
\end{align}
Here, the local reconstruction $\,z^\delta_K\in V(K)\,$ has homogeneous
Dirichlet conditions on \emph{interior} boundary parts $\,\partial K\,$ but
shall have homogeneous Neumann conditions on all Neumann boundaries of the
primal problem, i.\,e., on boundaries $\,\partial K\cap\Gamma_i\,$ with 
$\,i=A$,
$B$, $C$, $E$. For the choice $\,\varepsilon=2^{-8}$, $\,\gamma=0.1$, as well
as values of the advection field with magnitude in the range $\,0-300$, a
reference computation with $\,8.39\times10^6\,$ degrees of freedom yields the
result $\,\langle j,u^\varepsilon_{\text{ref}}\rangle\approx0.2170$. A
uniform sampling mesh with $\,32\,$ sampling regions is chosen, as well as a
macroscale discretization of $\,1.3\times10^5\,$ cells and a (fully resolved)
microscale discretization with $\,h=2^{-11}$. The optimization framework is
run for a fully resolved dual solution (``full'') with $\,2.1\times10^6\,$
cells as well as the reduced, locally enhanced variant given in
Definition~\ref{defi:4localenhancement} (``enhanced''). As stopping
criterion a reduction of $\,|\tilde\theta^\delta|\,$ to less than $\,5\,\%\,$ 
of the initial value is chosen with a penalty $\,\lambda=1.0\,m$, where $\,m\,$
is the absolute mean value of the diagonal entries of the matrix
$\,\mathcal{J}^T\mathcal{J}$, see (\ref{eq:5levenberg}), and a very small
regularization $\,\alpha_K=0.1\,$ (compared to $\,|\tilde\theta^\delta|^2/|\bar
A^\delta_K|^2\sim1\,000$). The numerical results are given in
Table~\ref{tab:5advdiffrandom}.

To examine the numerical stability of the optimization algorithm the
computation is actually run for $\,15\,$ adaptation cycles well beyond the
stopping criterion that is reached with step $\,12\,$ for the full dual solution
and with step $\,10\,$ for the local enhancement strategy. The initial error of
$~80\,\%$ in the target functional with a starting model $\,\bar
A^\delta_K=\gamma\,\text{Id}\,$ can be reduced to around $\,2-3\,\%\,$ for both
variants of dual solution. Further, the adaptation cycle remains stable
beyond the point where the stopping criterion was reached. Reference,
initial and final (for step $\,12\,$ and $\,10$, respectively) solutions 
are depicted in Figure~\ref{fig:5advdiffrandomsolution}. As can be seen 
from the numerical results, the microscale advection due to
$\,\boldsymbol{b^\varepsilon}\,$ leads to a locally increased value for $\,\bar
A^\delta_K\,$ in the range $\,0.01\,-\,0.02\,$ compared to the initial choice
$\,\bar A^\delta_K=\gamma\,\text{Id}\sim0.01$. The effective models found
with the optimization approach match the reference solution quite well near
the boundary $\,\Gamma_B$. In contrast, on the far end of $\,\Gamma_B\,$ near 
the inhomogeneous Neumann condition on $\,\Gamma_E$, the effective solutions
deviate from $\,u^\varepsilon$. This has to be expected as the optimization
problem only minimizes the error given by an integral over $\,\Gamma_B$.

\begin{table}
  \center
  \caption{Results for the model-optimization algorithm
    (Algorithm~\ref{alg:5modeloptimization}) applied to the
    advection-diffusion problem (\ref{eq:5advdiff}) with fully resolved
    dual solution (a) and for the reduced, locally enhanced variant (b).
    After steps $\,12\,$ and $\,10$, respectively, the estimator
    $\,|\tilde\theta^\delta|\,$ is reduced to less than $\,5\,\%$ of its initial
    value.}
  \label{tab:5advdiffrandom}
  \medskip
  \subfloat[model-optimization algorithm with fully resolved dual solution]{
    \footnotesize
    \begin{tabular} {r c c c c c c}
      \toprule
      & $L^2(\Omega)$
      & $|\langle j,U\rangle|$
      & $|\langle j,u^\varepsilon\!-\!U\rangle|$
      & $|\tilde\theta^\delta|$ & $I_{\text{eff}}$ & $I_{\text{loc}}$ \\[0.3em]
      \cmidrule(lr){2-2} \cmidrule(lr){3-5} \cmidrule(lr){6-7}
       1 & 4.43e-1 & 3.86e-1 & 1.69e-1 (77.9\,\%)  & 1.69e-1 & 1.00  & 2.21 \\
       3 & 2.80e-1 & 2.71e-1 & 5.36e-2 (24.7\,\%)  & 5.38e-2 & 1.00  & 4.24 \\
       5 & 2.29e-1 & 2.49e-1 & 3.16e-2 (14.6\,\%)  & 3.18e-2 & 1.00  & 5.86 \\
       7 & 1.90e-1 & 2.37e-1 & 2.02e-2 (9.30\,\%)  & 2.04e-2 & 1.00  & 7.68 \\
       9 & 1.60e-1 & 2.30e-1 & 1.32e-2 (6.08\,\%)  & 1.34e-2 & 1.00  & 10.2 \\
      11 & 1.39e-1 & 2.26e-1 & 9.14e-3 (4.21\,\%)  & 9.35e-3 & 1.00  & 13.2 \\[0.3em]
      12 & 1.30e-1 & 2.25e-1 & 7.91e-3 (3.64\,\%)  & 8.12e-3 & 1.00  & 14.7 \\[0.3em]
      15 & 1.13e-1 & 2.23e-1 & 6.28e-3 (2.89\,\%)  & 6.51e-3 & 0.99  & 17.3 \\
      \bottomrule
    \end{tabular}
  }

  \subfloat[model-optimization algorithm with reduced, locally enhanced
    dual solution]{
    \footnotesize
    \begin{tabular} {r c c c c c c}
      \toprule
      & $L^2(\Omega)$
      & $|\langle j,U\rangle|$
      & $|\langle j,u^\varepsilon\!-\!U\rangle|$
      & $|\tilde\theta^\delta|$ & $I_{\text{eff}}$ & $I_{\text{loc}}$ \\[0.3em]
      \cmidrule(lr){2-2} \cmidrule(lr){3-5} \cmidrule(lr){6-7}
       1 & 4.43e-1 & 3.86e-1 & 1.69e-1 (77.9\,\%) & 2.96e-1 &  1.76 & 2.15 \\
       3 & 2.33e-1 & 2.62e-1 & 4.48e-2 (20.6\,\%) & 4.92e-2 &  1.10 & 4.01 \\
       5 & 1.46e-1 & 2.39e-1 & 2.20e-2 (10.1\,\%) & 2.85e-2 &  1.30 & 3.85 \\
       7 & 9.86e-2 & 2.28e-1 & 1.08e-2 (4.97\,\%) & 2.06e-2 &  1.90 & 3.80 \\
       9 & 8.21e-2 & 2.22e-1 & 5.22e-3 (2.40\,\%) & 1.58e-2 &  3.04 & 4.20 \\[0.3em]
      10 & 7.99e-2 & 2.20e-1 & 3.02e-3 (1.39\,\%) & 1.27e-2 &  4.18 & 4.91 \\[0.3em]
      15 & 9.70e-2 & 2.14e-1 & 3.07e-3 (1.41\,\%) & 7.00e-3 &  2.28 & 10.4 \\
      \bottomrule
    \end{tabular}
  }
\end{table}

\begin{figure}
  \centering
  \subfloat[]{
    \quad
    \includegraphics[height=4.0cm] {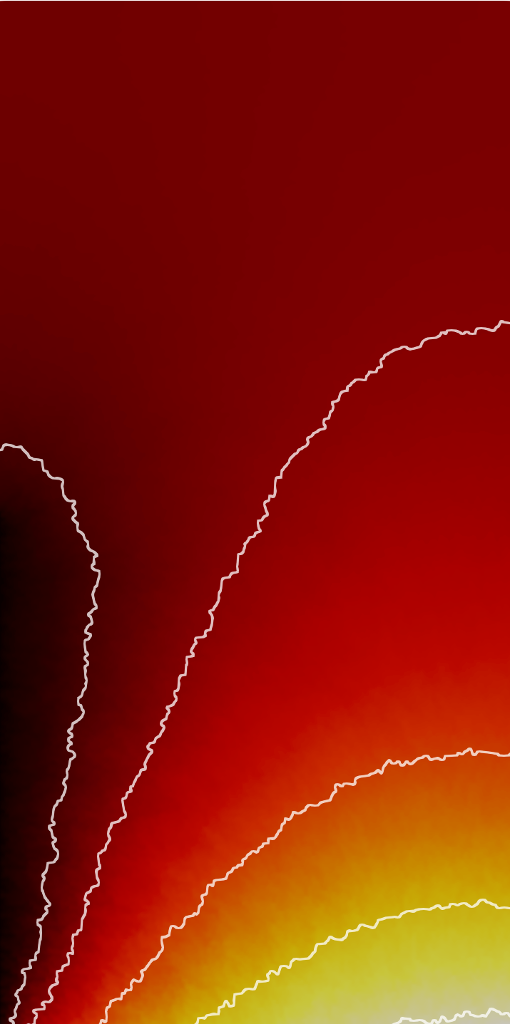}
    \quad
  }
  \subfloat[]{
    \quad
    \includegraphics[height=4.0cm] {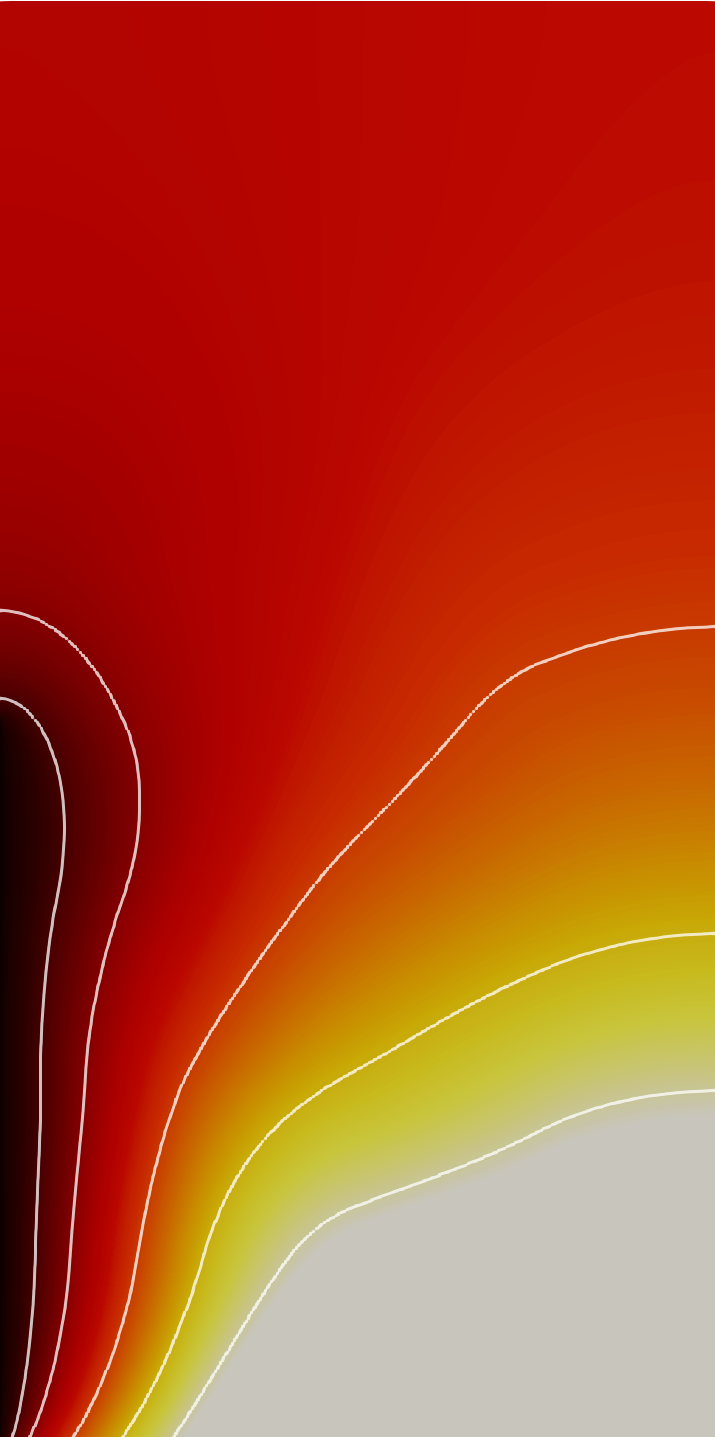}
    \quad
  }
  \subfloat[]{
    \quad
    \includegraphics[height=4.0cm] {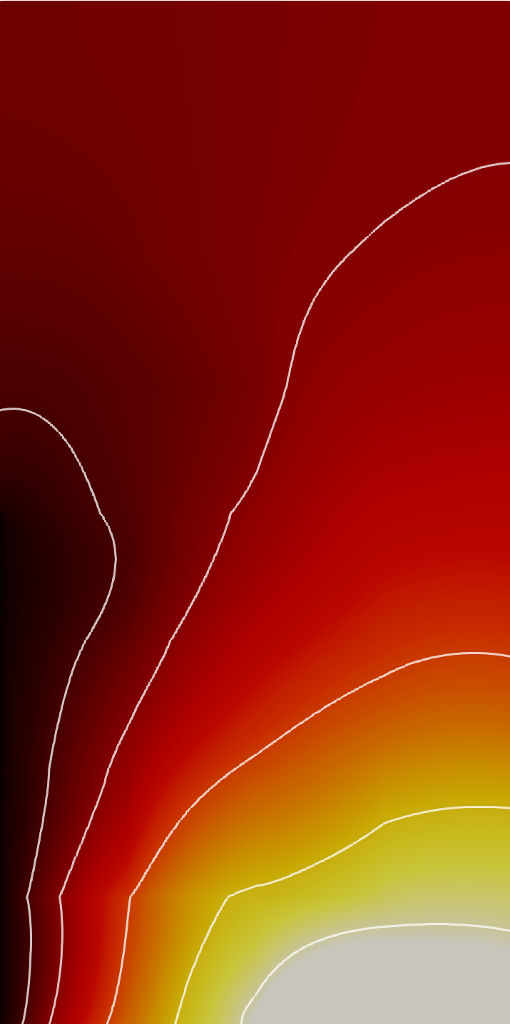}
    \quad
  }
  \includegraphics[height=4.0cm] {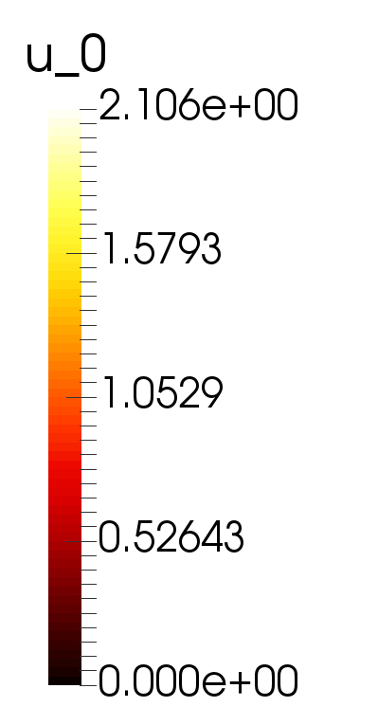}
  \caption{Plot of the reference solution (a), the initial
    solution $u^{\delta,0}$ of the optimization problem (b), and the final
    solution $u^{\delta,10}$ (c) obtained for the reduced, locally enhanced
    dual solution.}
  \label{fig:5advdiffrandomsolution}
\end{figure}


\section{Conclusion}
\label{sec:6conclusion}

A novel approach for model adaptivity is proposed that is based on solving
an optimization problem by minimizing the local model-error indicators
derived from a DWR formulation. The optimization approach allows to derive
an efficient post-processing strategy that can be regarded as a multiscale
approach in its own right. Its strength lies in the fact that it is in
principle independent of strong a priori knowledge about applicability of
efficient models---its efficiency is rooted in the almost quantitative
behavior of the DWR method when combined with a suitable localization
technique for the dual problem. The modeling aspect of the optimization
problem lies in the choice of the functional $\,\langle j,\cdot\rangle\,$ 
as quantity of interest (given by the application in mind) and
the choice of the localization approach for the dual problem. In this sense it
lifts the question of suitable approximation in terms of a quantity of
interest (for the primal problem) to the question of suitable approximation
properties of the localization technique for the dual problem. The
important property here is that the latter is typically measured in the
$L^2$-norm of the gradient of the error of the dual approximation, for
which---depending on the localization approach---strong approximation
properties are available.
Prototypical numerical results are presented for a heterogeneous elliptic
diffusion and an advection-diffusion problem, that indicate that the
optimization approach combined with a localization technique that
\emph{globally} uses the same effective model as the primal problem and
\emph{locally reconstructs} finescale features of the full dual solution
does result in an efficient model-adaptation strategy.



\begin{thebibliography}{40}
%
  \bibitem[Abdulle and Nonnenmacher(2013)]{Abdulle:2013}
  A. Abdulle and A. Nonnenmacher.
  \newblock {A Posteriori Error Estimate In Quantities of Interest for the
    Finite Element Heterogeneous Multiscale Method}.
  \newblock \emph{Numerical Methods for Partial Differential Equations},
    29\penalty0 (5):\penalty0 1629--1656, 2013.
%
  \bibitem[Allaire(1992)]{Allaire:1992}
  G. Allaire.
  \newblock {Homogenization and Two-Scale Convergence}.
  \newblock \emph{SIAM Journal on Mathematical Analysis}, 23\penalty0
    (6):\penalty0 1482--1518, 1992.
%
  \bibitem{Bangerth:2016}
  W. Bangerth, D. Davydov, T. Heister, L. Heltai, G. Kanschat, M.
  Kronbichler, M. Maier, B. Turcksin, and D. Wells.
  \newblock {The \texttt{deal.II} Library, Version 8.4}.
  \newblock \emph{Journal of Numerical Mathematics}, 24:\penalty0 135--141,
  2016.
%
  \bibitem[Becker and Rannacher(1996)]{Becker:1996a}
  R. Becker and R. Rannacher.
  \newblock {A Feed-Back Approach to Error Control in Finite Element Methods:
    Basic Analysis and Examples}.
  \newblock \emph{East-West Journal of Numerical Mathematics}, 4:\penalty0
    237--264, 1996.
%
  \bibitem[Becker and Rannacher(2001)]{Becker:2001}
  R. Becker and R. Rannacher.
  \newblock {An Optimal Control Approach to A Posteriori Error Estimation in
    Finite Element Methods}.
  \newblock \emph{Acta Numerica}, 10:\penalty0 1--102, 2001.
%
  \bibitem[Braack and Ern(2003)]{Braack:2003}
  M. Braack and A. Ern.
  \newblock {A Posteriori Control of Modeling Errors and Discretization Errors}.
  \newblock \emph{Multiscale Modeling and Simulation}, 1\penalty0 (2):\penalty0
    221--238, 2003.
%
  \bibitem[Brezzi(1999)]{Brezzi:1999}
  F. Brezzi.
  \newblock {Interacting with the Subgrid World}.
  \newblock \emph{Numerical Analysis}, pages 69--82, 1999.
%
  \bibitem[Cioranescu and Donato(1999)]{Cioranescu:1999}
  D. Cioranescu and P. Donato.
  \newblock \emph{{An Introduction to Homogenization}}, volume~17 of
    \emph{Oxford Lecture Series in Mathematics and its Applications}.
  \newblock Oxford University Press, 1\textsuperscript{st} edition, 1999.
%
  \bibitem[E and Engquist(2003a)]{E:2003a}
  W. E and B. Engquist.
  \newblock {The Heterogeneous Multiscale Methods}.
  \newblock \emph{Communications in Mathematical Sciences}, 1\penalty0
    (1):\penalty0 87--132, 2003.
%
  \bibitem[Efendiev et~al.(2004)Efendiev, Ginting, and Hou]{Efendiev:2004}
  Y. Efendiev, V. Ginting, and T.Y. Hou.
  \newblock {Multiscale Finite Element Methods for Nonlinear Problems and their
    Applications}.
  \newblock \emph{Communications in Mathematical Sciences}, 2\penalty0
    (4):\penalty0 553--589, 2004.
%
  \bibitem[Henning and Ohlberger(2011)]{Henning:2011}
  P. Henning and M. Ohlberger.
  \newblock {A Note on Homogenization of Advection-Diffusion Problems with Large
    Expected Drift}.
  \newblock \emph{Zeitschrift für Analysis und ihre Anwendungen}, 30\penalty0
    (3):\penalty0 319--339, 2011.
%
  \bibitem[Henning et~al.(2012)Henning, Ohlberger, and Schweizer]{Henning:2012}
  P. Henning, M. Ohlberger, and B. Schweizer.
  \newblock {An Adaptive Multiscale Finite Element Method}.
  \newblock Technical Report 05/12 - N, FB 10 , Universität Münster, 2012.

  \bibitem[Hughes et~al.(1998)Hughes, Feijóo, Mazzei, and Quincy]{Hughes:1998}
  T.J.R. Hughes, G.R. Feijóo, L. Mazzei, and J.-B. Quincy.
  \newblock {The Variational Multiscale Method---a Paradigm for Computational
    Mechanics}.
  \newblock \emph{Computer Methods in Applied Mechanics and Engineering},
    166\penalty0 (1-2):\penalty0 3--24, 1998.
%
  \bibitem[Larson and Målqvist(2004)]{Larson:2004}
  M.G. Larson and A. Målqvist.
  \newblock {Adaptive Variational Multiscale Methods Based on A Posteriori Error
    Estimation}.
  \newblock In \emph{{Proceedings of the 4th European Congress on Computational
    Methods in Applied Sciences and Engineering}}, 2004.
%
  \bibitem[Larson and Målqvist(2005)]{Larson:2005}
  M.G. Larson and A. Målqvist.
  \newblock {Adaptive Variational Multiscale Methods Based on A Posteriori Error
    Estimation: Duality Techniques for Elliptic Problems}.
  \newblock In \emph{{Multiscale Methods in Science and Engineering}}, volume~44
    of \emph{Lecture notes in Computational Science and Engineering}, pages
    181--193, 2005.
%
  \bibitem[Maier(2015)]{Maier:2015}
  M. Maier.
  \newblock {Duality-Based Adaptivity of Model and Discretization in Multiscale
    Finite-Element Methods}.
  \newblock {Dissertation, Ruprecht-Karls Universität Heidelberg, Germany.}
  \newblock 2015.
  \newblock {\url{http://archiv.ub.uni-heidelberg.de/volltextserver/18889/}}
%
  \bibitem[Maier and Rannacher(2014)]{Maier:2014}
  M. Maier and R. Rannacher.
  \newblock {Duality-Based Adaptivity in Heterogeneous Multiscale Finite Element
    Discretization}.
  \newblock \emph{Journal of Numerical Mathematics}, 24, 2016.
%
  \bibitem[Marquardt(1963)]{Marquardt:1963}
  D.W. Marquardt.
  \newblock {An Algorithm for Least-Squares Estimation of Nonlinear Parameters}.
  \newblock \emph{Journal of the Society of Industrial and Applied Mathematics},
    11\penalty0 (2):\penalty0 431--441, 1963.
%
  \bibitem[Oden and Vemaganti(2000)]{Oden:2000a}
  J.T. Oden and K.S. Vemaganti.
  \newblock {Estimation of Local Modeling Error and Goal-Oriented Adaptive
    Modeling of Heterogeneous Materials. Part I: Error Estimates and Adaptive
    Algorithms}.
  \newblock \emph{Journal of Computational Physics}, 164:\penalty0 22--47,
    2000.
%
  \bibitem[Oden and Vemaganti(2000b)]{Oden:2000b}
  J.T. Oden and K.S. Vemaganti.
  \newblock {Adaptive Modeling of Composite Structures: Modeling error
    estimation}.
  \newblock \emph{International Journal for Civil and Structural Engineering},
    1:\penalty0 1--16, 2000b.
%
  \bibitem[Romkes and Moody(2007)]{Romkes:2007}
  A. Romkes and T.C. Moody.
  \newblock {Local Goal-Oriented Estimation of Modeling Error for Multi-Scale
    Modeling of Heterogeneous Elastic Materials}.
  \newblock \emph{International Journal for Computational Methods in Engineering
    Science and Mechanics}, 8\penalty0 (4):\penalty0 201--209, 2007.
%
  \bibitem[Vogel(2008)]{Vogel:2008}
  H.-J. Vogel.
  \newblock {QuantIm 4.01b, C/C++ Library for Scientific Image Processing},
    2008.
%
  \bibitem[Warren and Price(1961)]{Warren:1961}
  J.E. Warren and H.S. Price.
  \newblock {Flow in Heterogeneous Porous Media}.
  \newblock \emph{Society of Petroleum Engineers Journal}, 1\penalty0
    (3):\penalty0 153--169, 1961.
%
\end{thebibliography}
\end{document}